\date{}
\begin{document}

\centerline{}

\centerline {\Large{\bf $p$-frame relative to $b$-linear functional in $n$-Banach space}}

\centerline{}

\newcommand{\mvec}[1]{\mbox{\bfseries\itshape #1}}
\centerline{}
\centerline{\textbf{Prasenjit Ghosh}}
\centerline{Department of Pure Mathematics, University of Calcutta,}
\centerline{35, Ballygunge Circular Road, Kolkata, 700019, West Bengal, India}
\centerline{e-mail: prasenjitpuremath@gmail.com}
\centerline{}
\centerline{\textbf{T. K. Samanta}}
\centerline{Department of Mathematics, Uluberia College,}
\centerline{Uluberia, Howrah, 711315,  West Bengal, India}
\centerline{e-mail: mumpu$_{-}$tapas5@yahoo.co.in}

\newtheorem{Theorem}{\quad Theorem}[section]

\newtheorem{definition}[Theorem]{\quad Definition}

\newtheorem{theorem}[Theorem]{\quad Theorem}

\newtheorem{remark}[Theorem]{\quad Remark}

\newtheorem{corollary}[Theorem]{\quad Corollary}

\newtheorem{note}[Theorem]{\quad Note}

\newtheorem{lemma}[Theorem]{\quad Lemma}

\newtheorem{example}[Theorem]{\quad Example}

\newtheorem{result}[Theorem]{\quad Result}
\newtheorem{conclusion}[Theorem]{\quad Conclusion}

\newtheorem{proposition}[Theorem]{\quad Proposition}

\begin{abstract}
\textbf{\emph{Concept of $p$-frame with the help of $b$-linear functional in the case of $n$-Banach space is being presented and its few properties, one of them, Cartesian product of two \,$p$-frames again becomes a \,$p$-frame, have been discussed.\,Finally, the perturbation results and the stability of $p$-frame in $n$-Banach space with respect to $b$-linear functional are being studied.}}
\end{abstract}
{\bf Keywords:}  \emph{Frame, Banach frame, $p$-frame, $n$-Banach space, $b$-linear functional.}\\

{\bf 2020 Mathematics Subject Classification:} \emph{42C15; 46C07; 46M05; 47A80.}

\section{Introduction}

In mathematical research, frame theory is now a very effective field, as it has various applications in mathematics, science and engineering.\,In particular, frames are extensively used in image processing, sampling theory, signal processing, differential equation, geophysics, wireless sensor network and many more.\,Hilbert space is the standard setting for frame.\,Frame can also be generalized to the Banach space setting.\,The theoretical approach of frame for Banach space is quite different with respect to that of Hilbert space.\,Frame for Hilbert space was defined as a sequence of basis-like elements in Hilbert space.\,But, in Banach space, due to the absence of inner product, frame was completely defined as a sequence of bounded linear functionals from the dual space of the Banach space.\,Feichtinger and Groching \cite{F, K} extended the notion of frames to Banach spaces and presented the atomic decomposition for Banach spaces.\,Then Grochenig \cite{G} extended Banach frame in more general way in Banach space.\,Thereafter, further development of Banach frame was done by Casazza et al. \cite{H}.\,Banach frame allows elements of a Banach space to be written as a linear combination of the frame elements in a stable manner. 

Aldroubi et al. \cite{A} introduced \,$p$-frame in a Banach space and discussed some of its properties.\,Chistensen and stoeva \cite{C} also stuided \,$p$-frames in separable Banach spaces.\,Stoeva \cite{S} presented the reconstruction series of \,$p$-frame in a separable Banach spaces.

The notion of linear\;$2$-normed space was introduced by S.\,Gahler \cite{Gahler}.\;A survey of the theory of linear\;$2$-normed space can be found in \cite{Freese}.\;The concept of $2$-Banach space is briefly discussed in \cite{White}.\;H.\,Gunawan and Mashadi \cite{Mashadi} developed the generalization of a linear\;$2$-normed space for \,$n \,\geq\, 2$.\,P. Ghosh and T. K. Samanta \cite{Prasenjit, GP} studied the frames in \,$n$-Hilbert spaces and in their tensor products.

In this paper, we consider the \,$p$-frame relative to bounded\;$b$-linear functional in \,$n$-Banach space.\,We will see that the Cartesian products of two \,$p$-frames is also a \,$p$-frame in \,$n$-Banach space.\,A sufficient condition for the stability of \,$p$-frame for \,$n$-Banach space under some perturbations is discussed.\,We also present the finite sum of \,$p$-frame and establish a sufficient condition for the finite sum to be a \,$p$-frame in \,$n$-Banach space.\,Finally, a result related to the stability of finite sum of \,$p$-frames in \,$n$-Banach spaces is established.

Throughout this paper,\;$X_{1}$\, is considered to be a complex separable Banach space and \,$X_{1}^{\,\ast}$, its dual space.\,$\mathcal{B}\,(\,X_{1}\,)$\, denotes the space of all bounded linear operators on \,$X_{1}$.\,It is assumed that \,$p \,\in\, (\,1,\,\infty\,)$\, and when \,$p$\, and \,$q$\, are used in a same assertion, they satisfy the relation \,$1 \,/\, p \,+\, 1 \,/\, q \,=\, 1$.

\section{Preliminaries}

\smallskip\hspace{.6 cm} 

\begin{definition}\cite{O}
Let \,$\left(\,H,\, \left<\,\cdot,\,\cdot\,\right>\,\right)$\, be a Hilbert space.\,A sequence \,$\left\{\,f_{\,i}\,\right\}_{i \,=\, 1}^{\infty} \,\subseteq\, H$\, is said to be a frame for \,$H$\, if there exist positive constants \,$A,\, B$\, such that
\[ A\; \|\,f\,\|_{H}^{\,2} \,\leq\, \sum\limits_{i \,=\, 1}^{\infty}\, \left|\ \left <\,f \,,\, f_{\,i} \, \right >\,\right|^{\,2} \,\leq\, B \,\|\,f\,\|_{H}^{\,2}\; \;\forall\; f \,\in\, H.\]
The constants \,$A$\, and \,$B$\, are called frame bounds.
\end{definition}

\begin{definition}\cite{G}
Let \,$X_{1}$\, be a Banach space, \,$X_{d}$\, be a sequence space, which is a Banach space and for which the co-ordinate functionals are continuous.\,Let \,$\left\{\,g_{\,i}\,\right\}_{i \,\in\, I} \,\subseteq\, X_{1}^{\,\ast}$\, and \,$S  \,:\, X_{d} \,\to\, X_{1}$\, be a linear bounded operator.\,Then the pair \,$\left(\,\{\,g_{\,i}\,\},\,S\,\right)$\, is said to be a Banach frame for \,$X_{1}$\, with respect to \,$X_{d}$\, if   
\begin{description}
\item[$(i)$]$\left\{\,g_{\,i}\,(\,f\,)\,\right\} \,\in\, X_{\,d}\; \;\forall\; f \,\in\, X_{1}$,
\item[$(ii)$]there exist \,$B \,\geq\, A \,>\, 0$\, such that \,\[A\,\|\,f\,\|_{X_{1}} \,\leq\, \left\|\,\left\{\,g_{\,i}\,(\,f\,)\,\right\}\,\right\|_{X_{d}} \,\leq\, B\,\|\,f\,\|_{X_{1}}\;\;\forall\, f \,\in\, X_{1},\]
\item[$(iii)$]$S\,\left(\,\left\{\,g_{\,i}\,(\,f\,)\,\right\}\,\right) \,=\, f\; \;\forall\; f \,\in\, X_{1}$.
\end{description}
The constants \,$A,\,B$\, are called Banach frame bounds and \,$S$\, is called the reconstruction operator.
\end{definition}

For particular \,$X_{d} \,=\, l^{\,p}$, Aldroubi et al. \cite{A} introduced the notion of \,$p$-frame.

\begin{definition}\cite{A}
Let \,$1 \,<\, p \,<\, \infty$.\,A countable family \,$\left\{\,g_{\,i}\,\right\}_{i \,\in\, I} \,\subseteq\, X_{1}^{\,\ast}$\, is said to be a p-frame for \,$X_{1}$\, if there exist constants \,$0 \,<\, A \,\leq\, B \,<\, \infty$\, such that
\begin{equation}\label{eq.1}
A \,\left\|\,f\,\right\|_{X_{1}} \,\leq\, \left(\,\sum\limits_{\,i \,\in\, I}\,\left|\,g_{\,i}\,(\,f\,)\,\right|^{\,p}\,\right)^{\,1 \,/\, p} \,\leq\, B \,\left\|\, f\,\right\|_{X_{1}}\; \;\forall\; f \,\in\, X_{1}.
\end{equation}
If the family \,$\left\{\,g_{\,i}\,\right\}_{i \,\in\, I}$\,  satisfies only the right inequality of (\ref{eq.1}), it is called a \,$p$-Bessel sequence in \,$X_{1}$\, with bound \,$B$.  
\end{definition}

\begin{definition}\cite{C}
Let \,$\left\{\,g_{\,i}\,\right\}_{i \,\in\, I} \,\subseteq\, X_{1}^{\,\ast}$\, be a p-frame for \,$X_{1}$.\,Then the operator \,$U \,:\, X_{1} \,\to\, l^{\,p}$\, defined by \,$U\,f \,=\, \{\,g_{\,i}\,(\,f\,)\,\}_{i \,\in\, I}$\, is called the analysis operator and the operator given by \,$T \,:\, l^{\,q} \,\to\, X_{1}^{\,\ast}, \;T\,\{\,d_{\,i}\,\} \,=\, \sum\limits_{i \,\in\, I}\,d_{\,i}\,g_{\,i}$\, is called the synthesis operator. 
\end{definition}

\begin{definition}\cite{Mashadi}
A \,$n$-norm on a linear space \,$X$\, (\,over the field \,$\mathbb{K}$\, of real or complex numbers\,) is a function
\[\left(\,x_{\,1},\, x_{\,2},\, \cdots,\, x_{\,n}\,\right) \,\longmapsto\, \left\|\,x_{\,1},\, x_{\,2},\, \cdots,\, x_{\,n}\,\right\|,\; x_{\,1},\, x_{\,2},\, \cdots,\, x_{\,n} \,\in\, X,\]from \,$X^{\,n}$\, to the set \,$\mathbb{R}$\, of all real numbers such that for every \,$x_{\,1},\, x_{\,2},\, \cdots,\, x_{\,n} \,\in\, X$\, and \,$\alpha \,\in\, \mathbb{K}$,
\begin{description}
\item[$(i)$]\;\; $\left\|\,x_{\,1},\, x_{\,2},\, \cdots,\, x_{\,n}\,\right\| \,=\, 0$\; if and only if \,$x_{\,1},\, \cdots,\, x_{\,n}$\; are linearly dependent,
\item[$(ii)$]\;\;\; $\left\|\,x_{\,1},\, x_{\,2},\, \cdots,\, x_{\,n}\,\right\|$\; is invariant under permutations of \,$x_{\,1},\, x_{\,2},\, \cdots,\, x_{\,n}$,
\item[$(iii)$]\;\;\; $\left\|\,\alpha\,x_{\,1},\, x_{\,2},\, \cdots,\, x_{\,n}\,\right\| \,=\, |\,\alpha\,|\, \left\|\,x_{\,1},\, x_{\,2},\, \cdots,\, x_{\,n}\,\right\|$,
\item[$(iv)$]\;\; $\left\|\,x \,+\, y,\, x_{\,2},\, \cdots,\, x_{\,n}\,\right\| \,\leq\, \left\|\,x,\, x_{\,2},\, \cdots,\, x_{\,n}\,\right\| \,+\,  \left\|\,y,\, x_{\,2},\, \cdots,\, x_{\,n}\,\right\|$.
\end{description}
A linear space \,$X$, together with a n-norm \,$\left\|\,\cdot,\, \cdots,\, \cdot \,\right\|$, is called a linear n-normed space. 
\end{definition}

\begin{definition}\cite{Mashadi}
A sequence \,$\{\,x_{\,k}\,\} \,\subseteq\, X$\, is said to converge to \,$x \,\in\, X$\; if 
\[\lim\limits_{k \to \infty}\,\left\|\,x_{\,k} \,-\, x,\, x_{\,2},\, \cdots,\, x_{\,n} \,\right\| \,=\, 0\; \;\forall\; x_{\,2},\, \cdots,\, x_{\,n} \,\in\, X\]
and it is called a Cauchy sequence if 
\[\lim\limits_{l,\, k \to \infty}\,\left\|\,x_{\,l} \,-\, x_{\,k},\, x_{\,2},\, \cdots,\, x_{\,n}\,\right\| \,=\, 0\; \;\forall\; x_{\,2},\, \cdots,\, x_{\,n} \,\in\, X.\]
The space \,$X$\, is said to be complete or n-Banach space if every Cauchy sequence in this space is convergent in \,$X$.
\end{definition}

\begin{theorem}\cite{Lahiri}
If \,$a$\, and \,$b$\, are real or complex numbers and \,$p \,\geq\, 1$, then 
\begin{equation}\label{eqq1}
\left|\,a \,+\, b\,\right|^{\,p} \,\leq\, 2^{\,p}\,\left(\,\left|\,a\,\right|^{\,p} \,+\, \left|\,b\,\right|^{\,p}\,\right).
\end{equation}
\end{theorem}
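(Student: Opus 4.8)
The plan is to reduce everything to the ordinary triangle inequality on the scalar field together with the monotonicity of $t \mapsto t^{\,p}$ on $[\,0,\,\infty\,)$.\,First I would invoke \,$|\,a \,+\, b\,| \,\leq\, |\,a\,| \,+\, |\,b\,|$, which holds for real or complex \,$a,\,b$.\,Next, writing \,$M \,=\, \max\{\,|\,a\,|,\, |\,b\,|\,\}$, observe that \,$|\,a\,| \,+\, |\,b\,| \,\leq\, 2\,M$, and since \,$p \,\geq\, 1$\, the function \,$t \,\mapsto\, t^{\,p}$\, is nondecreasing on \,$[\,0,\,\infty\,)$; hence raising both sides to the power \,$p$\, yields \,$|\,a \,+\, b\,|^{\,p} \,\leq\, (\,2\,M\,)^{\,p} \,=\, 2^{\,p}\,M^{\,p}$.

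The final step is to note that \,$M^{\,p} \,=\, \max\{\,|\,a\,|^{\,p},\, |\,b\,|^{\,p}\,\} \,\leq\, |\,a\,|^{\,p} \,+\, |\,b\,|^{\,p}$, since both summands are nonnegative.\,Combining these estimates gives \,$|\,a \,+\, b\,|^{\,p} \,\leq\, 2^{\,p}\,\left(\,|\,a\,|^{\,p} \,+\, |\,b\,|^{\,p}\,\right)$, which is precisely \,(\ref{eqq1}).

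The argument presents no genuine obstacle; the only points deserving a word of care are that raising to the \,$p$-th power preserves the inequality (valid because both sides are nonnegative and \,$t \,\mapsto\, t^{\,p}$\, is monotone for \,$p \,\geq\, 1$) and the degenerate case \,$a \,+\, b \,=\, 0$, where the left-hand side is \,$0$\, and the inequality is immediate.\,I remark in passing that the convexity of \,$t \,\mapsto\, t^{\,p}$\, would in fact furnish the sharper constant \,$2^{\,p \,-\, 1}$\, in place of \,$2^{\,p}$, but the stated bound is all that is needed for the perturbation and stability results developed in the sequel.
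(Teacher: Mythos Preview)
Your argument is correct and complete: the triangle inequality followed by the bound $|a|+|b|\leq 2\max\{|a|,|b|\}$, monotonicity of $t\mapsto t^{p}$, and the trivial estimate $\max\{|a|^{p},|b|^{p}\}\leq |a|^{p}+|b|^{p}$ give exactly (\ref{eqq1}). Note, however, that the paper does not supply its own proof of this inequality---it is quoted from the reference \cite{Lahiri} as a preliminary tool---so there is nothing in the paper to compare your approach against; your self-contained proof simply fills in what the paper leaves to the cited textbook.
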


\section{$p$-frame in $n$-Banach space}

\smallskip\hspace{.6 cm} In this section, we first define a bounded\;$b$-linear functional on \,$W \,\times\,\left<\,a_{\,2}\,\right> \,\times\, \cdots \,\times\, \left<\,a_{\,n}\,\right>$, where \,$W$\, be a subspace of \,$X$\, and \,$a_{\,2},\,\cdots,\, a_{\,n} \,\in\, X$\, and then the notion of \,$p$-frame in \,$n$-Banach space \,$X$\, is discussed.  

\begin{definition}\label{def1}
Let \,$\left(\,X,\, \|\,\cdot,\, \cdots,\, \cdot\,\|_{X} \,\right)$\, be a linear n-normed space and \,$a_{\,2},\, \cdots,\, a_{\,n}$\, be fixed elements in \,$X$.\;Let \,$W$\, be a subspace of \,$X$\, and \,$\left<\,a_{\,i}\,\right>$\, denote the subspaces of \,$X$\, generated by \,$a_{\,i}$, for \,$i \,=\, 2,\, 3,\, \cdots,\,n $.\;Then a map \,$T \,:\, W \,\times\,\left<\,a_{\,2}\,\right> \,\times\, \cdots \,\times\, \left<\,a_{\,n}\,\right> \,\to\, \mathbb{K}$\; is called a b-linear functional on \,$W \,\times\, \left<\,a_{\,2}\,\right> \,\times\, \cdots \,\times\, \left<\,a_{\,n}\,\right>$, if for every \,$x,\, y \,\in\, W$\, and \,$k \,\in\, \mathbb{K}$, the following conditions hold:
\begin{description}
\item[$(i)$] $T\,(\,x \,+\, y,\, a_{\,2},\, \cdots,\, a_{\,n}\,) \,=\, T\,(\,x,\, a_{\,2},\, \cdots,\, a_{\,n}\,) \,+\, T\,(\,y,\, a_{\,2},\, \cdots,\, a_{\,n}\,)$
\item[$(ii)$] $T\,(\,k\,x,\, a_{\,2},\, \cdots,\, a_{\,n}\,) \,=\, k\; T\,(\,x,\, a_{\,2},\, \cdots,\, a_{\,n}\,)$. 
\end{description}
A b-linear functional is said to be bounded if \,$\exists$\, a real number \,$M \,>\, 0$\; such that
\[\left|\,T\,(\,x,\, a_{\,2},\, \cdots,\, a_{\,n}\,)\,\right| \,\leq\, M\; \left\|\,x,\, a_{\,2},\, \cdots,\, a_{\,n}\,\right\|_{X}\; \;\forall\; x \,\in\, W.\]
The norm of the bounded b-linear functional \,$T$\, is defined by
\[\|\,T\,\| \,=\, \inf\,\left\{\,M \,>\, 0 \,:\, \left|\,T\,(\,x,\, a_{\,2},\, \cdots,\, a_{\,n}\,)\,\right| \,\leq\, M\, \left\|\,x,\, a_{\,2},\, \cdots,\, a_{\,n}\,\right\|_{X} \;\forall\; x \,\in\, W\,\right\}.\]
The norm of \,$T$\, can be expressed by any one of the following equivalent formula:
\begin{description}
\item[$(i)$]$\|\,T\,\| \,=\, \sup\,\left\{\,\left|\,T\,(\,x,\, a_{\,2},\, \cdots,\, a_{\,n}\,)\,\right| \;:\; \left\|\,x,\, a_{\,2},\, \cdots,\, a_{\,n}\,\right\|_{X} \,\leq\, 1\,\right\}$.
\item[$(ii)$]$\|\,T\,\| \,=\, \sup\,\left\{\,\left|\,T\,(\,x,\, a_{\,2},\, \cdots,\, a_{\,n}\,)\,\right| \;:\; \left\|\,x,\, a_{\,2},\, \cdots,\, a_{\,n}\,\right\|_{X} \,=\, 1\,\right\}$.
\item[$(iii)$]$ \|\,T\,\| \,=\, \sup\,\left \{\,\dfrac{\left|\,T\,(\,x,\, a_{\,2},\, \cdots,\, a_{\,n}\,)\,\right|}{\left\|\,x,\, a_{\,2},\, \cdots,\, a_{\,n}\,\right\|_{X}} \;:\; \left\|\,x,\, a_{\,2},\, \cdots,\, a_{\,n}\,\right\|_{X} \,\neq\, 0\,\right \}$. 
\end{description}
\end{definition}

Let \,$X_{F}^{\,\ast}$\, be the Banach space of all bounded\;$b$-linear functional defined on \,$X \,\times\, \left<\,a_{\,2}\,\right> \,\times \cdots \,\times\, \left<\,a_{\,n}\,\right>$.\,It is easy to verify that
\[\left\|\,x,\, a_{\,2},\, \cdots,\, a_{\,n}\,\right\|_{X} \,=\, \sup\,\left\{\, \dfrac{\left|\,T\,(\,x,\, a_{\,2},\, \cdots,\, a_{\,n}\,)\,\right|}{\|\,T\,\|} \,:\, T \,\in\, X^{\,\ast}_{F} \,,\; T \,\neq\, 0 \,\right\}.\]
Some properties of bounded\;$b$-linear functional defined on \,$X \,\times\, \left<\,a_{\,2}\,\right> \,\times\, \cdots \,\times\, \left<\,a_{\,n}\,\right>$\, have been discussed in \cite{TK}.\,For the remaining part of this paper, \,$X$\, denotes the \,$n$-Banach space with respect to the \,$n$-norm \,$\|\,\cdot,\, \cdots,\, \cdot\,\|_{X}$.

\begin{definition}
Let \,$1 \,<\, p \,<\, \infty$\, and \,$a_{\,2},\, \cdots,\, a_{\,n}$\, be fixed elements in \,$X$.\,A countable family \,$\left\{\,T_{i}\,\right\}_{i \,\in\, I} \,\subseteq\, X_{F}^{\,\ast}$\, is called a p-frame associated to \,$\left(\,a_{\,2},\, \cdots,\, a_{\,n}\,\right)$\, for \,$X$\, if there exist constants \,$A,\,B \,>\, 0$\, such that
\begin{equation}\label{eq1}
A \,\left\|\,x,\, a_{\,2},\, \cdots,\, a_{\,n}\,\right\|^{\,p}_{X} \,\leq\, \sum\limits_{\,i \,\in\, I}\,\left|\,T_{\,i}\,(\,x,\, a_{\,2},\, \cdots,\, a_{\,n}\,)\,\right|^{\,p} \,\leq\, B\,\left\|\,x,\, a_{\,2},\, \cdots,\, a_{\,n}\,\right\|^{\,p}_{X}
\end{equation}
for all \,$x \,\in\, X$.\,The constants \,$A$\, and \,$B$\, are called the lower and upper frame bounds.\,A p-frame associated to \,$\left(\,a_{\,2},\, \cdots,\, a_{\,n}\,\right)$\, is said to be a tight if \,$A \,=\, B$.\,It is called Parseval p-frame associated to \,$\left(\,a_{\,2},\, \cdots,\, a_{\,n}\,\right)$\, if \,$A \,=\, B \,=\, 1$.\,If the family \,$\left\{\,T_{i}\,\right\}_{i \,\in\, I}$\, satisfies only right inequality of (\ref{eq1}), it is called a p-Bessel sequence associated to \,$\left(\,a_{\,2},\, \cdots,\, a_{\,n}\,\right)$\, in \,$X$\, with bound \,$B$.   
\end{definition}

\begin{remark}
Suppose that \,$\left\{\,T_{\,i}\,\right\}_{i \,\in\, I}$\, is a tight p-frame associated to \,$\left(\,a_{\,2},\, \cdots,\, a_{\,n}\,\right)$\, for \,$X$\, with bound \,$A$.\,Then for all \,$x \,\in\, X$, we have
\[\sum\limits_{\,i \,\in\, I}\,\left|\,T_{\,i}\,(\,x,\, a_{\,2},\, \cdots,\, a_{\,n}\,)\,\right|^{\,p} \,=\, A\,\left\|\,x,\, a_{\,2},\, \cdots,\, a_{\,n}\,\right\|^{\,p}_{X}\]
\[\Rightarrow\,\sum\limits_{\,i \,\in\, I}\,\left|\,A^{\,-\,1 \,/\, p}\;T_{\,i}\,(\,x,\, a_{\,2},\, \cdots,\, a_{\,n}\,)\,\right|^{\,p} \,=\, \left\|\,x,\, a_{\,2},\, \cdots,\, a_{\,n}\,\right\|^{\,p}_{X}\]
This verify that \,$\left\{\,A^{\,-\, 1 \,/\, p}\;T_{\,i}\,\right\}_{i \,\in\, I}$\, is a Parseval p-frame associated to \,$\left(\,a_{\,2},\, \cdots,\, a_{\,n}\,\right)$\, for \,$X$.   
\end{remark}

\begin{theorem}
Let \,$\left\{\,T_{\,i}\,\right\}_{i \,\in\, I}$\, and \,$\left\{\,U_{\,i}\,\right\}_{i \,\in\, I}$\, be the p-Bessel sequences associated to \,$\left(\,a_{\,2},\, \cdots,\, a_{\,n}\,\right)$\, in \,$X$\, with bounds \,$A_{\,1}$\, and $A_{\,2}$, respectively.\,Then \,$\left\{\,T_{\,i} \,+\, U_{\,i}\,\right\}_{i \,\in\, I}$\, is a p-Bessel sequence associated to \,$\left(\,a_{\,2},\, \cdots,\, a_{\,n}\,\right)$\, in \,$X$\, with bound \,$2^{\,p}\,\max\left(\,A_{\,1},\,A_{\,2}\,\right)$. 
\end{theorem}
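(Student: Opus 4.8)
The plan is to check only the upper ($p$-Bessel) inequality for the family \,$\left\{\,T_{\,i} \,+\, U_{\,i}\,\right\}_{i \,\in\, I}$, reducing the whole estimate to a termwise use of the elementary inequality (\ref{eqq1}). First I would note that each \,$T_{\,i} \,+\, U_{\,i}$\, is again a bounded \,$b$-linear functional on \,$X \,\times\, \left<\,a_{\,2}\,\right> \,\times \cdots \,\times\, \left<\,a_{\,n}\,\right>$: additivity and homogeneity in the first slot are immediate, and \,$\|\,T_{\,i} \,+\, U_{\,i}\,\| \,\leq\, \|\,T_{\,i}\,\| \,+\, \|\,U_{\,i}\,\|$\, follows from the triangle inequality for the modulus, so \,$\left\{\,T_{\,i} \,+\, U_{\,i}\,\right\}_{i \,\in\, I} \,\subseteq\, X_{F}^{\,\ast}$\, and only the bound remains to be verified.

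Next, fix \,$x \,\in\, X$. For each \,$i \,\in\, I$, apply (\ref{eqq1}) with \,$a \,=\, T_{\,i}(\,x,\, a_{\,2},\, \cdots,\, a_{\,n}\,)$,\; $b \,=\, U_{\,i}(\,x,\, a_{\,2},\, \cdots,\, a_{\,n}\,)$\, and the given \,$p$\, (which satisfies \,$p \,\geq\, 1$) to obtain
\[\left|\,(\,T_{\,i} \,+\, U_{\,i}\,)(\,x,\, a_{\,2},\, \cdots,\, a_{\,n}\,)\,\right|^{\,p} \,\leq\, 2^{\,p}\left(\,\left|\,T_{\,i}(\,x,\, a_{\,2},\, \cdots,\, a_{\,n}\,)\,\right|^{\,p} \,+\, \left|\,U_{\,i}(\,x,\, a_{\,2},\, \cdots,\, a_{\,n}\,)\,\right|^{\,p}\,\right).\]
Then I would sum over \,$i \,\in\, I$\, and split the right-hand side into two series. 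The \,$p$-Bessel hypothesis on \,$\left\{\,T_{\,i}\,\right\}$\, bounds the first by \,$A_{\,1}\,\left\|\,x,\, a_{\,2},\, \cdots,\, a_{\,n}\,\right\|_{X}^{\,p}$\, and that on \,$\left\{\,U_{\,i}\,\right\}$\, bounds the second by \,$A_{\,2}\,\left\|\,x,\, a_{\,2},\, \cdots,\, a_{\,n}\,\right\|_{X}^{\,p}$; in particular both series converge, hence so does the series for \,$\left\{\,T_{\,i} \,+\, U_{\,i}\,\right\}$. Collecting constants and using \,$A_{\,1},\, A_{\,2} \,\leq\, \max(\,A_{\,1},\, A_{\,2}\,)$\, gives an estimate of the form
\[\sum_{i \,\in\, I}\left|\,(\,T_{\,i} \,+\, U_{\,i}\,)(\,x,\, a_{\,2},\, \cdots,\, a_{\,n}\,)\,\right|^{\,p} \,\leq\, 2^{\,p}\,\max(\,A_{\,1},\, A_{\,2}\,)\,\left\|\,x,\, a_{\,2},\, \cdots,\, a_{\,n}\,\right\|_{X}^{\,p},\]
uniformly in \,$x \,\in\, X$, which is the claimed \,$p$-Bessel bound.

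There is no serious obstacle: convergence of the new series is handed over to the two hypotheses, and the remainder is bookkeeping. The single point deserving care is the precise constant in the statement — a literal application of (\ref{eqq1}) yields the factor \,$2^{\,p}(\,A_{\,1} \,+\, A_{\,2}\,)$, hence \,$2^{\,p \,+\, 1}\max(\,A_{\,1},\, A_{\,2}\,)$, so to land on exactly \,$2^{\,p}\max(\,A_{\,1},\, A_{\,2}\,)$\, one should instead invoke the sharper convexity inequality \,$\left|\,a \,+\, b\,\right|^{\,p} \,\leq\, 2^{\,p \,-\, 1}\left(\,\left|\,a\,\right|^{\,p} \,+\, \left|\,b\,\right|^{\,p}\,\right)$, valid for \,$p \,\geq\, 1$, in the termwise step.
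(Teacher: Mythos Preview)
Your approach is exactly the paper's: apply the elementary inequality termwise, sum, and insert the two Bessel bounds. You even add the (harmless) verification that $T_i+U_i\in X_F^{\ast}$, which the paper omits.

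Your final paragraph is spot on and in fact catches a slip in the paper. The paper uses (\ref{eqq1}) to reach $2^{\,p}\bigl(A_1+A_2\bigr)\left\|x,a_2,\ldots,a_n\right\|_X^{\,p}$ and then passes directly to $2^{\,p}\max(A_1,A_2)\left\|x,a_2,\ldots,a_n\right\|_X^{\,p}$, which is not a valid inequality since $A_1+A_2\leq 2\max(A_1,A_2)$, not $\max(A_1,A_2)$. Your proposed remedy---replacing (\ref{eqq1}) by the sharper convexity bound $|a+b|^{\,p}\leq 2^{\,p-1}\bigl(|a|^{\,p}+|b|^{\,p}\bigr)$ for $p\geq 1$---is exactly what is needed to obtain the constant $2^{\,p}\max(A_1,A_2)$ stated in the theorem.
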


\begin{proof}
For each \,$x \,\in\, X$, we have 
\[\sum\limits_{\,i \,\in\, I}\left|\left(\,T_{\,i} + U_{\,i}\right)(\,x,\, a_{\,2},\, \cdots,\, a_{\,n}\,)\right|^{p} = \sum\limits_{\,i \,\in\, I}\left|\,T_{\,i}\,(\,x,\, a_{\,2},\, \cdots,\, a_{\,n}\,) + U_{\,i}\,(\,x,\, a_{\,2},\, \cdots,\, a_{\,n}\,)\,\right|^{p}\]
\[\leq\, 2^{\,p}\,\left(\,\sum\limits_{\,i \,\in\, I}\,\left|\,T_{\,i}\,(\,x,\, a_{\,2},\, \cdots,\, a_{\,n}\,)\,\right|^{\,p} \,+\, \sum\limits_{\,i \,\in\, I}\,\left|\,U_{\,i}\,(\,x,\, a_{\,2},\, \cdots,\, a_{\,n}\,)\,\right|^{\,p}\,\right)\;[\;\text{by $(\ref{eqq1})$}\;]\]
\[\leq\, 2^{\,p}\,\left(\,A_{\,1}\,\left\|\,x,\, a_{\,2},\, \cdots,\, a_{\,n}\,\right\|^{\,p}_{X}\,+\, A_{\,2}\,\left\|\,x,\, a_{\,2},\, \cdots,\, a_{\,n}\,\right\|^{\,p}_{X}\,\right)\hspace{3.5cm}\]
\[\leq\, 2^{\,p}\,\max\left(\,A_{\,1},\,A_{\,2}\,\right)\,\left\|\,x,\, a_{\,2},\, \cdots,\, a_{\,n}\,\right\|^{\,p}_{X}.\hspace{6.1cm}\]
Thus \,$\left\{\,T_{\,i} \,+\, U_{\,i}\,\right\}_{i \,\in\, I}$\, is a \,$p$-Bessel sequence associated to \,$\left(\,a_{\,2},\, \cdots,\, a_{\,n}\,\right)$\, in \,$X$\, with bound \,$2^{\,p}\,\max\left(\,A_{\,1},\,A_{\,2}\,\right)$.
\end{proof}

\begin{definition}
A sequence \,$\left\{\,f_{\,i}\,\right\}_{i \,\in\, I} \,\subseteq\, X$\, is said to be a \,$q$-frame associated to \,$\left(\,a_{\,2},\, \cdots,\, a_{\,n}\,\right)$\, for \,$X_{F}^{\,\ast}$\, if there exist constants \,$A,\,B \,>\, 0$\, such that
\begin{equation}\label{epqr1}
A \,\left\|\,T\,\right\|^{\,q}_{X^{\,\ast}_{F}} \,\leq\, \sum\limits_{\,i \,\in\, I}\,\left|\,T\,(\,f_{\,i},\, a_{\,2},\, \cdots,\, a_{\,n}\,)\,\right|^{\,q} \,\leq\, B\,\left\|\,T\,\right\|^{\,q}_{X^{\,\ast}_{F}}\; \;\forall\; T \,\in\, X_{F}^{\,\ast}.
\end{equation}
If the family \,$\left\{\,f_{\,i}\,\right\}_{i \,\in\, I}$\, satisfies only the right inequality of (\ref{epqr1}), it is called a q-Bessel sequence associated to \,$\left(\,a_{\,2},\, \cdots,\, a_{\,n}\,\right)$\, for \,$X^{\,\ast}_{F}$.
\end{definition}

\begin{lemma}
Suppose that \,$\left\{\,T_{i}\,\right\}_{i \,\in\, I} \,\subseteq\, X_{F}^{\,\ast}$\, satisfies the lower p-frame condition.\,Then the operator \,$U \,:\, \mathcal{D}\,(\,U\,) \,\subseteq\, X \,\to\, l^{\,p},\; U\,x \,=\, \left\{\,T_{\,i}\,(\,x,\,a_{\,2},\, \cdots,\, a_{\,n}\,)\,\right\}_{i \,\in\, I}$, where \,$\,\mathcal{D}\,(\,U\,) \,=\, \left\{\,x \,\in\, X \,:\, \sum\limits_{\,i \,\in\, I}\,\left|\,T_{\,i}\,(\,x,\, a_{\,2},\, \cdots,\, a_{\,n}\,)\,\right|^{\,p} \,<\, \infty\,\right\}$, is a closed operator.  
\end{lemma}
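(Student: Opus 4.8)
The plan is to show that the graph of $U$ is closed, i.e.\ if $x_{k} \to x$ in $X$ and $U x_{k} \to c = \{\,c_{\,i}\,\}_{i \,\in\, I}$ in $l^{\,p}$, then $x \,\in\, \mathcal{D}\,(\,U\,)$ and $U x \,=\, c$. First I would record the two ingredients that make this work: each $T_{\,i}$ is a bounded $b$-linear functional, so $x \,\mapsto\, T_{\,i}\,(\,x,\,a_{\,2},\, \cdots,\, a_{\,n}\,)$ is continuous on $X$ (with respect to the $n$-norm, holding the last $n-1$ slots fixed at $a_{\,2},\, \cdots,\, a_{\,n}$); and convergence in $l^{\,p}$ forces coordinatewise convergence, since the coordinate functionals on $l^{\,p}$ are continuous.

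Next I would combine these. Fix $i \,\in\, I$. From $x_{k} \to x$ and boundedness of $T_{\,i}$ we get $T_{\,i}\,(\,x_{k},\,a_{\,2},\, \cdots,\, a_{\,n}\,) \,\to\, T_{\,i}\,(\,x,\,a_{\,2},\, \cdots,\, a_{\,n}\,)$; indeed
\[
\left|\,T_{\,i}\,(\,x_{k},\,a_{\,2},\, \cdots,\, a_{\,n}\,) \,-\, T_{\,i}\,(\,x,\,a_{\,2},\, \cdots,\, a_{\,n}\,)\,\right| \;=\; \left|\,T_{\,i}\,(\,x_{k} - x,\,a_{\,2},\, \cdots,\, a_{\,n}\,)\,\right| \;\leq\; \|\,T_{\,i}\,\|\; \left\|\,x_{k} - x,\,a_{\,2},\, \cdots,\, a_{\,n}\,\right\|_{X} \;\longrightarrow\; 0 .
\]
On the other hand, $U x_{k} \to c$ in $l^{\,p}$ gives $T_{\,i}\,(\,x_{k},\,a_{\,2},\, \cdots,\, a_{\,n}\,) \,=\, (\,U x_{k}\,)_{\,i} \,\to\, c_{\,i}$. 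By uniqueness of limits in $\mathbb{K}$, $c_{\,i} \,=\, T_{\,i}\,(\,x,\,a_{\,2},\, \cdots,\, a_{\,n}\,)$ for every $i \,\in\, I$.

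Finally I would conclude: since $c \,\in\, l^{\,p}$, we have $\sum_{i \,\in\, I}\,\left|\,T_{\,i}\,(\,x,\,a_{\,2},\, \cdots,\, a_{\,n}\,)\,\right|^{\,p} \,=\, \sum_{i \,\in\, I}\,|\,c_{\,i}\,|^{\,p} \,<\, \infty$, so $x \,\in\, \mathcal{D}\,(\,U\,)$, and the identity just established reads $U x \,=\, c$. Hence the graph of $U$ is closed, which is exactly the assertion that $U$ is a closed operator. Two small points worth a remark: the lower $p$-frame hypothesis is not actually needed for closedness (it is listed presumably because this lemma is used in tandem with it later), and one should note that $\mathcal{D}\,(\,U\,)$ is a linear subspace containing $0$, so "closed operator" is the appropriate notion here rather than "bounded operator." The only mild subtlety — and the closest thing to an obstacle — is being careful that the continuity of $T_{\,i}$ is genuinely continuity in the first argument with the remaining arguments pinned to $a_{\,2},\, \cdots,\, a_{\,n}$, which is precisely the boundedness estimate in Definition \ref{def1}; everything else is a routine two-limit matching argument.
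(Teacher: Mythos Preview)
Your proposal is correct and follows essentially the same route as the paper: take a sequence in the graph, use boundedness of each $T_{\,i}$ to get coordinatewise convergence from $x_{k}\to x$, use $l^{\,p}$-convergence of $U x_{k}$ to get coordinatewise convergence to $c_{\,i}$, and match limits to conclude $x\in\mathcal{D}(U)$ with $Ux=c$. Your added observation that the lower $p$-frame hypothesis is never invoked in the closedness argument is accurate; the paper's proof does not use it either.
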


\begin{proof}
To prove that \,$U$\, is closed, consider a sequence \,$\left\{\,x_{\,k}\,\right\} \,\subset\, \mathcal{D}\,(\,U\,)$\, for which
\[x_{\,k} \,\to\, x\; \;\text{in}\; X\; \;\text{and}\; \;U\,x_{\,k} \,\to\, \left\{\,c_{\,i}\,\right\}_{i \,\in\, I}\; \;\text{in}\; \;l^{\,p}\; \;\text{when}\; k \,\to\, \infty.\]
Since all \,$T_{\,i}$\, are bounded\;$b$-linear functionals, for all \,$i$, we have
\[\left|\,T_{\,i}\,\left(\,x_{\,k},\, a_{\,2},\, \cdots,\, a_{\,n}\,\right) \,-\, T_{\,i}\,(\,x,\, a_{\,2},\, \cdots,\, a_{\,n}\,)\,\right| \,=\, \left|\,T_{\,i}\,\left(\,x_{\,k} \,-\, x,\, a_{\,2},\, \cdots,\, a_{\,n}\,\right)\,\right|\] 
\[\hspace{1cm}\leq\, \left\|\,T_{\,i}\,\right\|\,\left\|\,x_{\,k} \,-\, x,\, a_{\,2},\, \cdots,\, a_{\,n}\,\right\|_{X} \,\to\, 0\; \;\text{as}\; \;k \,\to\, \infty.\]
\[\text{That is},\hspace{.3cm} T_{\,i}\,\left(\,x_{\,k},\, a_{\,2},\, \cdots,\, a_{\,n}\,\right) \,\to\, T_{\,i}\,(\,x,\, a_{\,2},\, \cdots,\, a_{\,n}\,)\; \;\text{as}\; \;k \,\to\, \infty,\; \,\forall\; i.\hspace{3cm}\]
Now, using the assumption and definition of \,$U$, we get \,$T_{\,i}\,\left(\,x_{\,k},\, a_{\,2},\, \cdots,\, a_{\,n}\,\right) \,\to\, c_{\,i}$\, as \,$k \,\to\, \infty$.\,Thus \,$\left\{\,T_{\,i}\,(\,x,\,a_{\,2},\, \cdots,\, a_{\,n}\,)\,\right\}_{i \,\in\, I} \,=\, \left\{\,c_{\,i}\,\right\}_{i \,\in\, I}$.\,This shows that \,$x \,\in\, \mathcal{D}\,(\,U\,)$\, and \,$U\,x \,=\, \left\{\,c_{\,i}\,\right\}_{i \,\in\, I}$, and hence \,$U$\, is closed.
\end{proof}

\begin{lemma}
Let \,$\left\{\,T_{i}\,\right\}_{i \,\in\, I} \,\subseteq\, X_{F}^{\,\ast}$\, and assume that there exists a q-Bessel sequence \,$\left\{\,f_{\,i}\,\right\}_{i \,\in\, I} \,\subseteq\, X$\,  associated to \,$\left(\,a_{\,2},\, \cdots,\, a_{\,n}\,\right)$\, for \,$X_{F}^{\,\ast}$\, such that 
\[x \,=\, \sum\limits_{\,i \,\in\, I}\,T_{\,i}\,(\,x,\, a_{\,2},\, \cdots,\, a_{\,n}\,)\,f_{\,i},\; \text{for all \,$x \,\in\, \mathcal{D}\,(\,U\,)$}.\]
Then the sequence \,$\left\{\,T_{i}\,\right\}_{i \,\in\, I}$\, satisfies the lower \,$p$-frame condition for \,$X$.  
\end{lemma}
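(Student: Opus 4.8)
The plan is to combine the reconstruction formula $x = \sum_{i \in I} T_{\,i}(x, a_{\,2}, \cdots, a_{\,n})\, f_{\,i}$ with the duality description of the $n$-norm recorded right after Definition \ref{def1}, namely that $\left\|\,x,\, a_{\,2},\, \cdots,\, a_{\,n}\,\right\|_{X}$ equals the supremum of $\left|\,T(x, a_{\,2}, \cdots, a_{\,n})\,\right| / \|\,T\,\|$ over all nonzero $T \in X_{F}^{\,\ast}$, and then to estimate the numerator by H\"older's inequality. Before doing so I would dispose of the case $x \notin \mathcal{D}(U)$: there $\sum_{i \in I} \left|\,T_{\,i}(x, a_{\,2}, \cdots, a_{\,n})\,\right|^{\,p} = \infty$ by the very definition of $\mathcal{D}(U)$, whereas $\left\|\,x,\, a_{\,2},\, \cdots,\, a_{\,n}\,\right\|_{X}$ is finite, so the lower $p$-frame inequality holds vacuously for any positive constant. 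Thus it suffices to work with $x \in \mathcal{D}(U)$, where the hypothesis supplies the reconstruction formula.

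So fix $x \in \mathcal{D}(U)$ and a nonzero $T \in X_{F}^{\,\ast}$. Since $T$ is bounded, $\left|\,T(y, a_{\,2}, \cdots, a_{\,n})\,\right| \le \|\,T\,\|\,\left\|\,y,\, a_{\,2},\, \cdots,\, a_{\,n}\,\right\|_{X}$, so $T(\,\cdot\,, a_{\,2}, \cdots, a_{\,n})$ is continuous with respect to convergence in the $n$-norm; applying it to the series that sums to $x$ in $X$ and using finite additivity term by term gives
\[T(x, a_{\,2}, \cdots, a_{\,n}) \,=\, \sum_{i \in I} T_{\,i}(x, a_{\,2}, \cdots, a_{\,n})\, T(f_{\,i}, a_{\,2}, \cdots, a_{\,n}).\]
Then the triangle inequality followed by H\"older's inequality with the conjugate exponents $p$ and $q$ yields
\[\left|\,T(x, a_{\,2}, \cdots, a_{\,n})\,\right| \,\le\, \left(\sum_{i \in I} \left|\,T_{\,i}(x, a_{\,2}, \cdots, a_{\,n})\,\right|^{\,p}\right)^{1/p}\left(\sum_{i \in I} \left|\,T(f_{\,i}, a_{\,2}, \cdots, a_{\,n})\,\right|^{\,q}\right)^{1/q}.\]
Because $\left\{\,f_{\,i}\,\right\}_{i \in I}$ is a $q$-Bessel sequence associated to $(a_{\,2}, \cdots, a_{\,n})$ for $X_{F}^{\,\ast}$, say with bound $B$, the second factor is at most $B^{1/q}\,\|\,T\,\|$.

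Dividing by $\|\,T\,\| \neq 0$ and taking the supremum over all such $T$, the duality formula produces
\[\left\|\,x,\, a_{\,2},\, \cdots,\, a_{\,n}\,\right\|_{X} \,\le\, B^{1/q}\left(\sum_{i \in I} \left|\,T_{\,i}(x, a_{\,2}, \cdots, a_{\,n})\,\right|^{\,p}\right)^{1/p},\]
and raising both sides to the $p$-th power shows that $\left\{\,T_{\,i}\,\right\}_{i \in I}$ satisfies the lower $p$-frame condition on $X$ with constant $A = B^{-p/q}$. The one step requiring genuine care is the interchange of $T$ with the countable sum in the reconstruction formula; this is exactly where boundedness of the $b$-linear functional and the fact that $n$-norm convergence controls precisely the first argument (with $a_{\,2}, \cdots, a_{\,n}$ frozen) are used. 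Everything else is bookkeeping with H\"older's inequality and the $q$-Bessel estimate.
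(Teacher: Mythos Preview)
Your proof is correct and follows essentially the same route as the paper's: both combine the duality description of the $n$-norm with the reconstruction formula, apply H\"older's inequality, invoke the $q$-Bessel bound $B$ to obtain the lower constant $B^{-p/q}$, and dispose of $x \notin \mathcal{D}(U)$ trivially. Your version adds an explicit justification for passing $T$ through the series via continuity, which the paper leaves implicit.
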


\begin{proof}
Assume that \,$\left\{\,f_{\,i}\,\right\}_{i \,\in\, I}$\, satisfies the upper \,$q$-frame condition for \,$X_{F}^{\,\ast}$\, with a bound \,$B$.\,For every \,$x \,\in\, \mathcal{D}\,(\,U\,)$, the sequence \,$\left\{\,T_{\,i}\,(\,x,\, a_{\,2},\, \cdots,\, a_{\,n}\,)\,\right\}_{i \,\in\, I}$\, belongs to \,$l^{\,p}$\, and
\[\left\|\,x,\,a_{\,2},\,\cdots,\,a_{\,n}\,\right\|_{X} \,=\, \sup\limits_{R \,\in\, X_{F}^{\,\ast},\,\|\,R\,\| \,\leq\, 1}\,\left|\,R\,\left(\,\sum\limits_{\,i \,\in\, I}\,T_{\,i}\,(\,x,\, a_{\,2},\, \cdots,\, a_{\,n}\,)\,f_{\,i},\,a_{\,2},\,\cdots,\,a_{\,n}\,\right)\,\right|\]
\[\leq\, \sup\limits_{R \,\in\, X_{F}^{\,\ast},\,\|\,R\,\| \,\leq\, 1}\,\left\|\,\left\{\,T_{\,i}\,(\,x,\, a_{\,2},\, \cdots,\, a_{\,n}\,)\,\right\}_{i \,\in\, I}\,\right\|_{\,p}\,\left\|\,\left\{\,R\,(\,f_{\,i},\, a_{\,2},\, \cdots,\, a_{\,n}\,)\,\right\}_{i \,\in\, I}\,\right\|_{\,q}\hspace{1.5cm}\]
\[\leq\, B^{\,1 \,/\, q}\,\left\|\,\left\{\,T_{\,i}\,(\,x,\, a_{\,2},\, \cdots,\, a_{\,n}\,)\,\right\}_{i \,\in\, I}\,\right\|_{\,p} \,=\, B^{\,1 \,/\, q}\,\left(\,\sum\limits_{\,i \,\in\, I}\,\left|\,T_{\,i}\,(\,x,\, a_{\,2},\, \cdots,\, a_{\,n}\,)\,\right|^{\,p}\,\right)^{\,1 \,/\, p}\]
\[\Rightarrow\, B^{\,-\, p \,/\, q}\,\left\|\,x,\,a_{\,2},\,\cdots,\,a_{\,n}\,\right\|^{\,p}_{X} \,\leq\, \sum\limits_{\,i \,\in\, I}\,\left|\,T_{\,i}\,(\,x,\, a_{\,2},\, \cdots,\, a_{\,n}\,)\,\right|^{\,p}.\hspace{3.7cm}\]
Obviously, for \,$x \,\in\, X \,\setminus\, \mathcal{D}\,(\,U\,)$, the lower \,$p$-frame condition is satisfied. 
\end{proof}

\begin{theorem}
Let \,$\left\{\,T_{i}\,\right\}_{i \,\in\, I} \,\subseteq\, X_{F}^{\,\ast}$\, be a \,$p$-Bessel sequence associated to \,$(\,a_{\,2},\, \cdots$, \,$a_{\,n}\,)$\, in \,$X$\, with bound \,$B$\, and \,$\left\{\,f_{\,i}\,\right\}_{i \,\in\, I} \,\subseteq\, X$\, be a \,$q$-Bessel sequence associated to \,$\left(\,a_{\,2},\, \cdots,\, a_{\,n}\,\right)$\, for \,$X_{F}^{\,\ast}$\, such that 
\[x \,=\, \sum\limits_{\,i \,\in\, I}\,T_{\,i}\,(\,x,\, a_{\,2},\, \cdots,\, a_{\,n}\,)\,f_{\,i}, \;\text{for all}\; x \,\in\, X.\]
Then following statements are hold:
\begin{description}
\item[$(i)$]\,$\left\{\,f_{\,i}\,\right\}_{i \,\in\, I}$\, is a \,$q$-frame associated to \,$\left(\,a_{\,2},\, \cdots,\, a_{\,n}\,\right)$\, for \,$X_{F}^{\,\ast}$\, with lower bound \,$B^{\,-\, q \,/\, p}$.
\item[$(ii)$]\,$R \,=\, \sum\limits_{\,i \,\in\, I}\,R\,\left(\,f_{\,i},\, a_{\,2},\, \cdots,\, a_{\,n}\,\right)\,T_{\,i}\; \;\forall\; R \,\in\, X_{F}^{\,\ast}$.  
\end{description} 
\end{theorem}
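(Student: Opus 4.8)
The common mechanism for both parts is the identity obtained by feeding the reconstruction formula to an arbitrary \,$R \,\in\, X_F^{\,\ast}$. First I would fix \,$R \,\in\, X_F^{\,\ast}$\, and \,$x \,\in\, X$. By hypothesis \,$x \,=\, \sum_{i \,\in\, I}\, T_{\,i}(\,x,\, a_{\,2},\, \cdots,\, a_{\,n}\,)\, f_{\,i}$, the partial sums converging to \,$x$\, in the $n$-norm; in particular \,$\left\|\,\sum_{i \,\in\, F}\, T_{\,i}(\,x,\, a_{\,2},\, \cdots,\, a_{\,n}\,)\, f_{\,i} \,-\, x,\, a_{\,2},\, \cdots,\, a_{\,n}\,\right\|_{X} \,\to\, 0$\, along finite sets \,$F \,\subseteq\, I$. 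Since \,$R$\, is a bounded $b$-linear functional, \,$\left|\,R(\,u,\, a_{\,2},\, \cdots,\, a_{\,n}\,)\,\right| \,\leq\, \|\,R\,\|\,\left\|\,u,\, a_{\,2},\, \cdots,\, a_{\,n}\,\right\|_{X}$, so \,$R(\,\cdot,\, a_{\,2},\, \cdots,\, a_{\,n}\,)$\, is continuous for this convergence; applying it to the partial sums and using $b$-linearity on each finite sum gives
\[
R(\,x,\, a_{\,2},\, \cdots,\, a_{\,n}\,) \,=\, \sum\limits_{i \,\in\, I}\, R(\,f_{\,i},\, a_{\,2},\, \cdots,\, a_{\,n}\,)\; T_{\,i}(\,x,\, a_{\,2},\, \cdots,\, a_{\,n}\,), \qquad x \,\in\, X.
\]

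For \,$(i)$, I would apply H\"older's inequality to the right-hand side of this identity and then use the $p$-Bessel bound \,$B$\, of \,$\{\,T_{\,i}\,\}$\, on the first factor:
\[
\left|\,R(\,x,\, a_{\,2},\, \cdots,\, a_{\,n}\,)\,\right| \,\leq\, \Big(\,\sum\limits_{i \,\in\, I}\,\left|\,T_{\,i}(\,x,\, a_{\,2},\, \cdots,\, a_{\,n}\,)\,\right|^{\,p}\,\Big)^{1/p}\Big(\,\sum\limits_{i \,\in\, I}\,\left|\,R(\,f_{\,i},\, a_{\,2},\, \cdots,\, a_{\,n}\,)\,\right|^{\,q}\,\Big)^{1/q}
\]
\[
\leq\, B^{\,1/p}\,\left\|\,x,\, a_{\,2},\, \cdots,\, a_{\,n}\,\right\|_{X}\Big(\,\sum\limits_{i \,\in\, I}\,\left|\,R(\,f_{\,i},\, a_{\,2},\, \cdots,\, a_{\,n}\,)\,\right|^{\,q}\,\Big)^{1/q}.
\]
Taking the supremum over \,$x$\, with \,$\left\|\,x,\, a_{\,2},\, \cdots,\, a_{\,n}\,\right\|_{X} \,\leq\, 1$\, and recalling the formula for \,$\|\,R\,\|$\, yields \,$\|\,R\,\|_{X_F^{\,\ast}}^{\,q} \,\leq\, B^{\,q/p}\,\sum_{i \,\in\, I}\,\left|\,R(\,f_{\,i},\, a_{\,2},\, \cdots,\, a_{\,n}\,)\,\right|^{\,q}$, i.e.\ the lower $q$-frame inequality holds with constant \,$B^{\,-\, q/p}$. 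The upper inequality is exactly the hypothesised $q$-Bessel property of \,$\{\,f_{\,i}\,\}$, so \,$\{\,f_{\,i}\,\}_{i \,\in\, I}$\, is a $q$-frame associated to \,$(\,a_{\,2},\, \cdots,\, a_{\,n}\,)$\, for \,$X_F^{\,\ast}$\, with lower bound \,$B^{\,-\, q/p}$.

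For \,$(ii)$, the identity above already shows that \,$R$\, and \,$\sum_{i \,\in\, I}\, R(\,f_{\,i},\, a_{\,2},\, \cdots,\, a_{\,n}\,)\, T_{\,i}$\, agree at every \,$(\,x,\, a_{\,2},\, \cdots,\, a_{\,n}\,)$; what remains is to see that the series \,$\sum_{i}\, R(\,f_{\,i},\, a_{\,2},\, \cdots,\, a_{\,n}\,)\, T_{\,i}$\, converges in the Banach space \,$X_F^{\,\ast}$. For a finite \,$F \,\subseteq\, I$, H\"older's inequality together with the $p$-Bessel bound gives, for \,$\left\|\,x,\, a_{\,2},\, \cdots,\, a_{\,n}\,\right\|_{X} \,\leq\, 1$,
\[
\Big|\,\sum\limits_{i \,\in\, F}\, R(\,f_{\,i},\, a_{\,2},\, \cdots,\, a_{\,n}\,)\, T_{\,i}(\,x,\, a_{\,2},\, \cdots,\, a_{\,n}\,)\,\Big| \,\leq\, B^{\,1/p}\Big(\,\sum\limits_{i \,\in\, F}\,\left|\,R(\,f_{\,i},\, a_{\,2},\, \cdots,\, a_{\,n}\,)\,\right|^{\,q}\,\Big)^{1/q},
\]
so \,$\big\|\,\sum_{i \,\in\, F}\, R(\,f_{\,i},\, a_{\,2},\, \cdots,\, a_{\,n}\,)\, T_{\,i}\,\big\| \,\leq\, B^{\,1/p}\big(\,\sum_{i \,\in\, F}\,\left|\,R(\,f_{\,i},\, a_{\,2},\, \cdots,\, a_{\,n}\,)\,\right|^{\,q}\,\big)^{1/q}$. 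Since \,$\sum_{i}\,\left|\,R(\,f_{\,i},\, a_{\,2},\, \cdots,\, a_{\,n}\,)\,\right|^{\,q} \,<\, \infty$\, by the $q$-Bessel property of \,$\{\,f_{\,i}\,\}$, the partial sums form a Cauchy net, hence converge in \,$X_F^{\,\ast}$; evaluating the limit at \,$(\,x,\, a_{\,2},\, \cdots,\, a_{\,n}\,)$\, and comparing with the identity above identifies it with \,$R$, which proves \,$R \,=\, \sum_{i \,\in\, I}\, R(\,f_{\,i},\, a_{\,2},\, \cdots,\, a_{\,n}\,)\, T_{\,i}$.

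The only genuinely delicate point is the initial identity: one must be sure that the reconstruction series converges in a topology on which a bounded $b$-linear functional is continuous. Convergence in the $n$-norm \,$\|\,\cdot,\, a_{\,2},\, \cdots,\, a_{\,n}\,\|_{X}$\, is precisely what is available and precisely what boundedness of \,$R$\, can exploit; once that interchange of \,$R$\, with the sum is justified, everything after it is H\"older's inequality and bookkeeping with the conjugate exponents \,$p$\, and \,$q$.
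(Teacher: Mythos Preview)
Your proposal is correct and follows essentially the same line as the paper: apply $R$ to the reconstruction formula, use H\"older together with the $p$-Bessel bound of $\{T_i\}$ to get $\|R\|\le B^{1/p}\big(\sum_i|R(f_i,a_2,\dots,a_n)|^q\big)^{1/q}$ for (i), and for (ii) bound the tail $\big\|\sum_{i\in F}R(f_i,a_2,\dots,a_n)T_i\big\|$ by $B^{1/p}\big(\sum_{i\in F}|R(f_i,a_2,\dots,a_n)|^q\big)^{1/q}$ and invoke the $q$-Bessel hypothesis. The only cosmetic difference is that the paper, working with $I=\mathbb{N}$, estimates $\|R-\sum_{i=1}^{k}R(f_i,a_2,\dots,a_n)T_i\|$ directly rather than first proving Cauchy convergence and then identifying the limit; your explicit justification of passing $R$ through the reconstruction series is a point the paper leaves implicit.
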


\begin{proof}$(i)$
Since \,$\left\{\,f_{\,i}\,\right\}_{i \,\in\, I}$\, is a \,$q$-Bessel sequence  associated to \,$\left(\,a_{\,2},\, \cdots,\, a_{\,n}\,\right)$\, for \,$X_{F}^{\,\ast}$, for every \,$R \,\in\, X_{F}^{\,\ast}$, the sequence \,$\left\{\,R\,(\,x,\, a_{\,2},\, \cdots,\, a_{\,n}\,)\,\right\}_{i \,\in\, I}$\, belongs to \,$l^{\,q}$.\,Now,
\[\|\,R\,\|_{X_{F}^{\,\ast}} \,=\, \sup\limits_{x \,\in\, X,\;\left\|\,x,\,a_{\,2},\,\cdots,\,a_{\,n}\,\right\|_{X} \,=\, 1}\,\left|\,R\,(\,x,\, a_{\,2},\, \cdots,\, a_{\,n}\,)\,\right|\hspace{4.7cm}\]
\[=\, \sup\limits_{x \,\in\, X,\;\left\|\,x,\,a_{\,2},\,\cdots,\,a_{\,n}\,\right\|_{X} \,=\, 1}\,\left|\,\sum\limits_{\,i \,\in\, I}\,R\,\left(\,f_{\,i},\, a_{\,2},\, \cdots,\, a_{\,n}\,\right)\,T_{\,i}\,(\,x,\, a_{\,2},\, \cdots,\, a_{\,n}\,)\,\right|\hspace{2.5cm}\]
\[\leq\, \sup\limits_{x \,\in\, X,\,\left\|\,x,\,a_{\,2},\,\cdots,\,a_{\,n}\,\right\|_{X} \,=\, 1}\,\left\|\,\left\{\,T_{\,i}\,(\,x,\, a_{\,2},\, \cdots,\, a_{\,n}\,)\,\right\}_{i \,\in\, I}\,\right\|_{p}\,\left\|\,\left\{\,R\,(\,f_{\,i},\, a_{\,2},\, \cdots,\, a_{\,n}\,)\,\right\}_{i \,\in\, I}\,\right\|_{q}\]
\[\leq\, B^{\,1 \,/\, p}\,\left\|\,\left\{\,R\,(\,f_{\,i},\, a_{\,2},\, \cdots,\, a_{\,n}\,)\,\right\}_{i \,\in\, I}\,\right\|_{\,q} \,=\, B^{\,1 \,/\, p}\,\left(\,\sum\limits_{\,i \,\in\, I}\,\left|\,R\,(\,f_{\,i},\, a_{\,2},\, \cdots,\, a_{\,n}\,)\,\right|^{\,q}\,\right)^{\,1 \,/\, q}.\]
\[\bigg[\,\text{since \,$\left\{\,T_{i}\,\right\}_{i \,\in\, I}$\, is a \,$p$-Bessel sequence associated to \,$\left(\,a_{\,2},\, \cdots,\, a_{\,n}\,\right)$\, with bound \,$B$}\,\bigg].\]
This shows that \,$\left\{\,f_{\,i}\,\right\}_{i \,\in\, I}$\, is a \,$q$-frame associated to \,$\left(\,a_{\,2},\, \cdots,\, a_{\,n}\,\right)$\, for \,$X_{F}^{\,\ast}$\, with lower bound $B^{\,-\, q \,/\, p}$.\\

$(ii)$\;\;For the proof of \,$(ii)$, we use \,$\mathbb{N}$\, as an index set.\,Fix an arbitrary \,$R \,\in\, X_{F}^{\,\ast}$.\,By the above similar calculation, for \,$k \,\in\, \mathbb{N}$, we get
\[\left\|\,R \,-\, \sum\limits_{i \,=\, 1}^{\,k}\,R\,\left(\,f_{\,i},\, a_{\,2},\, \cdots,\, a_{\,n}\,\right)\,T_{\,i}\,\right\|_{X_{F}^{\,\ast}}\hspace{2cm}\]
\[ \,\leq\, B^{\,1 \,/\, p}\, \left(\,\sum\limits_{i \,=\, k \,+\, 1}\,\left|\,R\,\left(\,f_{\,i},\, a_{\,2},\, \cdots,\, a_{\,n}\,\right)\,\right|^{\,q}\,\right)^{\,1 \,/\, q}\to\, 0\;\; \;\text{as}\; k \,\to\, \infty.\]
This completes the proof.  
\end{proof}

\begin{theorem}
The sequence \,$\left\{\,T_{i}\,\right\}_{i \,\in\, I} \,\subseteq\, X_{F}^{\,\ast}$\, is a \,$p$-Bessel sequence associated to \,$\left(\,a_{\,2},\, \cdots,\, a_{\,n}\,\right)$\, in \,$X$\, with bound \,$B$\, if and only if the operator given by \[T \,\,:\, l^{\,q} \,\to\, X_{F}^{\,\ast},\; \;T\,\left(\,\left\{\,d_{\,i}\,\right\}\,\right) \,=\, \sum\limits_{\,i \,\in\, I}\,d_{\,i}\,T_{\,i},\; \;\text{for all}\; \left\{\,d_{\,i}\,\right\} \,\in\, l^{\,q}\] is well-defined and bounded operator and \,$\|\,T\,\|^{\,p} \,\leq\, B$. 
\end{theorem}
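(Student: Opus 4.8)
The plan is to prove the two implications separately, in each case exploiting the duality between $l^{p}$ and $l^{q}$ and Hölder's inequality; completeness of $X_{F}^{\ast}$ is needed only to obtain well-definedness.

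For the forward implication, suppose $\{T_{i}\}_{i\in I}$ is a $p$-Bessel sequence associated to $(a_{2},\ldots,a_{n})$ with bound $B$. Using $\mathbb{N}$ as index set, fix $\{d_{i}\}\in l^{q}$ and set $S_{N}=\sum_{i=1}^{N}d_{i}T_{i}$. For $M<N$ and any $x\in X$ with $\|x,a_{2},\ldots,a_{n}\|_{X}\le 1$, Hölder's inequality and the upper $p$-frame bound give
\[\left|\sum_{i=M+1}^{N}d_{i}\,T_{i}(x,a_{2},\ldots,a_{n})\right|\le\left(\sum_{i=M+1}^{N}|d_{i}|^{q}\right)^{1/q}\left(\sum_{i=M+1}^{N}|T_{i}(x,a_{2},\ldots,a_{n})|^{p}\right)^{1/p}\le B^{1/p}\left(\sum_{i=M+1}^{N}|d_{i}|^{q}\right)^{1/q}.\]
Taking the supremum over such $x$ shows $\|S_{N}-S_{M}\|_{X_{F}^{\ast}}\le B^{1/p}\big(\sum_{i=M+1}^{N}|d_{i}|^{q}\big)^{1/q}\to 0$ as $M,N\to\infty$, so $\{S_{N}\}$ is Cauchy in the Banach space $X_{F}^{\ast}$ and therefore converges; hence $T(\{d_{i}\})=\sum_{i\in I}d_{i}T_{i}$ is well-defined, and it is linear as a limit of linear maps. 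Letting $N\to\infty$ in the estimate $\|S_{N}\|_{X_{F}^{\ast}}\le B^{1/p}\big(\sum_{i=1}^{N}|d_{i}|^{q}\big)^{1/q}$ yields $\|T(\{d_{i}\})\|_{X_{F}^{\ast}}\le B^{1/p}\|\{d_{i}\}\|_{q}$, so $T$ is bounded with $\|T\|^{p}\le B$.

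For the converse, assume $T$ is well-defined and bounded with $\|T\|^{p}\le B$. Fix $x\in X$ and a finite subset $J\subseteq I$; for $i\in J$ put $d_{i}=|T_{i}(x,a_{2},\ldots,a_{n})|^{p-1}c_{i}$, where $c_{i}$ is a scalar of modulus one with $c_{i}\,T_{i}(x,a_{2},\ldots,a_{n})=|T_{i}(x,a_{2},\ldots,a_{n})|$ (take $d_{i}=0$ when $T_{i}(x,a_{2},\ldots,a_{n})=0$), and put $d_{i}=0$ for $i\notin J$, so that $\{d_{i}\}\in l^{q}$ since it has finite support. Using $(p-1)q=p$ one computes $\sum_{i\in J}d_{i}\,T_{i}(x,a_{2},\ldots,a_{n})=\sum_{i\in J}|T_{i}(x,a_{2},\ldots,a_{n})|^{p}$ and $\|\{d_{i}\}\|_{q}=\big(\sum_{i\in J}|T_{i}(x,a_{2},\ldots,a_{n})|^{p}\big)^{1/q}$. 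Applying the defining inequality of the $b$-linear functional $T(\{d_{i}\})=\sum_{i\in J}d_{i}T_{i}$ at $(x,a_{2},\ldots,a_{n})$ and then the operator bound for $T$,
\[\sum_{i\in J}|T_{i}(x,a_{2},\ldots,a_{n})|^{p}\le\|T\|\,\|\{d_{i}\}\|_{q}\,\|x,a_{2},\ldots,a_{n}\|_{X}=\|T\|\left(\sum_{i\in J}|T_{i}(x,a_{2},\ldots,a_{n})|^{p}\right)^{1/q}\|x,a_{2},\ldots,a_{n}\|_{X}.\]
If the left-hand sum is nonzero, dividing by its $(1/q)$-th power and using $1-\tfrac1q=\tfrac1p$, then raising to the $p$-th power, gives $\sum_{i\in J}|T_{i}(x,a_{2},\ldots,a_{n})|^{p}\le\|T\|^{p}\|x,a_{2},\ldots,a_{n}\|_{X}^{p}\le B\,\|x,a_{2},\ldots,a_{n}\|_{X}^{p}$, the case of a zero sum being trivial. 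Since $J$ was an arbitrary finite subset of $I$, taking the supremum over all such $J$ gives the upper $p$-frame inequality with bound $B$.

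I expect the converse to be the delicate part: the test coefficients $d_{i}$ must be chosen so that they actually lie in $l^{q}$ — which forces finite support and a separate treatment of vanishing terms, together with the unimodular factors $c_{i}$ in the complex case — and so that pairing $\sum d_{i}T_{i}$ against $x$ reproduces $\sum_{i\in J}|T_{i}(x,a_{2},\ldots,a_{n})|^{p}$ exactly; after that it is only bookkeeping with Hölder's inequality. The forward implication is routine once one recalls that $X_{F}^{\ast}$ is complete.
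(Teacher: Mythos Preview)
Your forward implication is essentially identical to the paper's: both use H\"older's inequality to estimate $\bigl|\sum_{i=k+1}^{l} d_i T_i(x,a_2,\ldots,a_n)\bigr|$ uniformly over unit $x$, obtain the Cauchy property in $X_F^{\ast}$, and read off $\|T\|\le B^{1/p}$.

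For the converse you take a genuinely different route. The paper fixes $x$ and forms the continuous linear functional $\phi_x\colon\{d_i\}\mapsto T(\{d_i\})(x,a_2,\ldots,a_n)$ on $l^{q}$, invokes the duality $(l^{q})^{\ast}\cong l^{p}$ to conclude $\{T_i(x,a_2,\ldots,a_n)\}\in l^{p}$, identifies $\|\phi_x\|=(\sum_i|T_i(x,a_2,\ldots,a_n)|^{p})^{1/p}$ via the canonical basis, and bounds $\|\phi_x\|$ by $\|T\|\,\|x,a_2,\ldots,a_n\|_X$. You instead construct explicitly the extremising sequence for H\"older's inequality on a finite subset $J$, obtain the Bessel bound for $\sum_{i\in J}$, and then pass to the supremum over finite $J$. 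Your argument is more elementary in that it does not invoke the $(l^{q})^{\ast}=l^{p}$ theorem as a black box and sidesteps any convergence question by working with finite support throughout; the paper's argument is shorter and more conceptual, packaging the same computation into the norm of a single functional. Both are correct.
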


\begin{proof}
First we consider that \,$\left\{\,T_{\,i}\,\right\}_{i \,\in\, I} \,\subseteq\, X_{F}^{\,\ast}$\, is a \,$p$-Bessel sequence associated to \,$\left(\,a_{\,2},\, \cdots,\, a_{\,n}\,\right)$\, in \,$X$\, with bound \,$B$.\,For \,$k \,>\, l$, we have
\[\left\|\,\sum\limits_{\,i \,=\, 1}^{\,l}\,d_{\,i}\,T_{\,i} \,-\, \sum\limits_{\,i \,=\, 1}^{\,k}\,d_{\,i}\,T_{\,i}\,\right\| \,=\, \sup\limits_{x \,\in\, X,\;\left\|\,x,\, a_{\,2},\, \cdots,\, a_{\,n}\,\right\|_{X} \,=\, 1}\,\left|\,\sum\limits_{\,i \,=\, k\,+\,1}^{\,l}\,d_{\,i}\,T_{\,i}\,(\,x,\,a_{\,2},\, \cdots,\, a_{\,n}\,)\,\right|\] 
\[\leq\, \sup\limits_{x \,\in\, X,\;\left\|\,x,\, a_{\,2},\, \cdots,\, a_{\,n}\,\right\|_{X} \,=\, 1}\,\left(\,\sum\limits_{\,i \,=\, k\,+\,1}^{\,l}\,|\,d_{\,i}\,|^{\,q}\,\right)^{\,1 \,/\, q}\,\left(\,\sum\limits_{\,i \,=\, k\,+\,1}^{\,l}\,\left|\,T_{\,i}\,(\,x,\, a_{\,2},\, \cdots,\, a_{\,n}\,)\,\right|^{\,p}\,\right)^{\,1 \,/\, p}\hspace{1cm}\]
\[\leq\, B^{\,1 \,/\, p}\,\left(\,\sum\limits_{\,i \,=\, k\,+\,1}^{\,l}\,|\,d_{\,i}\,|^{\,q}\,\right)^{\, 1 \,/\, q}\; \bigg[\;\text{$\left\{\,T_{i}\,\right\}$\, is \,$p$-Bessel sequence associated to \,$\left(\,a_{\,2},\, \cdots,\, a_{\,n}\,\right)$}\,\bigg]\]
Therefore, \,$\sum\limits_{\,i \,\in\, I}\,d_{\,i}\,T_{\,i}$\, is convergent and \,$T$\, is well-defined.\,From the above calculation also it follows that \,$\left\|\,T\,\{\,d_{\,i}\,\}\,\right\| \,\leq\, B^{\,1 \,/\, p}\,\left(\,\sum\limits_{\,i \,\in\, I}\,|\,d_{\,i}\,|^{\,q}\,\right)^{\, 1 \,/\, q}$, i\,.\,e.,\;$T$\, is bounded and \,$\|\,T\,\|^{\,p} \,\leq\, B$.\\

Conversely, suppose that \,$T$\, is well-defined and bounded with \,$\|\,T\,\|^{\,p} \,\leq\, B$.\,For given \,$x \,\in X$, the map 
\[\phi_{\,x} \,:\, \{\,d_{\,i}\,\} \,\to\, T\,\left(\,\{\,d_{\,i}\,\}\,\right)\,(\,x,\,a_{\,2},\, \cdots,\, a_{\,n}\,) \,=\, \sum\limits_{\,i \,\in\, I}\,d_{\,i}\,T_{\,i}\,(\,x,\,a_{\,2},\, \cdots,\, a_{\,n}\,)\]
is a continuous linear functional on \,$l^{\,q}$, so \,$\left\{\,T_{\,i}\,(\,x,\,a_{\,2},\, \cdots,\, a_{\,n}\,)\,\right\}_{i \,\in\, I} \,\in\, l^{\,p}$.\,Let \,$\{\,e_{\,i}\,\}$\, be the natural basis for \,$l^{\,q}$.\,Then we have
\[\left\|\,\phi_{\,x}\,\right\| \,=\, \left(\,\sum\limits_{\,i \,\in\, I}\,\left|\,\phi_{\,x}\,(\,e_{\,i}\,)\,\right|^{\,p}\,\right)^{\,1 \,/\, p} \,=\, \left(\,\sum\limits_{\,i \,\in\, I}\,\left|\,T_{\,i}\,(\,x,\,a_{\,2},\,\cdots,\,a_{\,n}\,)\,\right|^{\,p}\,\right)^{\,1 \,/\, p}.\]
Now, for all \,$\{\,d_{\,i}\,\} \,\in\, l^{\,q}$, we have
\[\left|\,\phi_{\,x}\,\{\,d_{\,i}\,\}\,\right| = \left|\,T\,\left(\,\{\,d_{\,i}\,\}\,\right)\,(\,x,\,a_{\,2},\, \cdots,\, a_{\,n}\,)\,\right|\]
\[\hspace{.5cm} \leq \|\,T\,\|\left\|\,x,\, a_{\,2},\, \cdots,\, a_{\,n}\,\right\|_{X}\left(\,\sum\limits_{\,i \,\in\, I}\,|\,d_{\,i}\,|^{\,q}\,\right)^{\,1 \,/\, q}.\]
Thus
\[\sum\limits_{\,i \,\in\, I}\,\left|\,T_{\,i}\,(\,x,\,a_{\,2},\,\cdots,\,a_{\,n}\,)\,\right|^{\,p} \,=\, \left\|\,\phi_{\,x}\,\right\|^{\,p} \,\leq\, \|\,T\,\|^{\,p}\,\left\|\,x,\, a_{\,2},\, \cdots,\, a_{\,n}\,\right\|_{X}^{\,p}\]
\[\hspace{2.4cm} \,\leq\, B\,\left\|\,x,\, a_{\,2},\, \cdots,\, a_{\,n}\,\right\|_{X}^{\,p}.\]
This shows that $\left\{\,T_{i}\,\right\}_{\,i \,\in\, I}$\, is \,$p$-Bessel sequence associated to \,$\left(\,a_{\,2},\, \cdots,\, a_{\,n}\,\right)$\, with bound \,$B$.\,This completes the proof.\\   
\end{proof}

Let \,$\left(\,Y,\, \|\,\cdot,\, \cdots,\, \cdot\,\|_{Y} \,\right)$\, be a \,$n$-Banach space.\,Then the Cartesian product of \,$X$\, and \,$Y$\, is denoted by \,$X \,\oplus\, Y$\, and defined to be an \,$n$-Banach space with respect to the \,$n$-norm 
\[\left\|\,x_{\,1} \,\oplus\, y_{\,1},\, x_{\,2} \,\oplus\, y_{\,2},\, \cdots,\, x_{\,n} \,\oplus\, y_{\,n}\,\right\|^{\,p} \,=\, \left\|\,x_{\,1},\, x_{\,2},\, \cdots,\, x_{\,n}\,\right\|^{\,p}_{X} \,+\, \left\|\,y_{\,1},\, y_{\,2},\, \cdots,\, y_{\,n}\,\right\|^{\,p}_{Y},\]
for all \,$x_{\,1} \,\oplus\, y_{\,1},\, x_{\,2} \,\oplus\, y_{\,2},\,\cdots,\, x_{\,n} \,\oplus\, y_{\,n} \,\in\, X \,\oplus\, Y$, and \,$x_{\,1},\, x_{\,2},\, \cdots,\, x_{\,n} \,\in\, X$; \,$\,y_{\,1},\, y_{\,2},\, \cdots,\, y_{\,n} \,\in\, Y$.\,According to the definition (\ref{def1}), consider \,$Y^{\,\ast}_{G}$\, as the Banach space of all bounded\;$b$-linear functional defined on \,$Y \,\times\, \left<\,b_{\,2}\,\right> \,\times \cdots \,\times\, \left<\,b_{\,n}\,\right>$\, and \,$Z^{\,\ast}$\, as the Banach space of all bounded\;$b$-linear functional defined on \,$X \,\oplus\, Y \,\times\, \left<\,a_{\,2} \,\oplus\, b_{\,2}\,\right> \,\times \cdots \,\times\, \left<\,a_{\,n} \,\oplus\, b_{\,n}\,\right>$, where \,$b_{\,2},\, \cdots,\,b_{\,n} \,\in\, Y$\, and \,$a_{\,2} \,\oplus\, b_{\,2},\, \cdots,\, a_{\,n} \,\oplus\, b_{\,n} \,\in\, X \,\oplus\, Y$\, are fixed elements.\,Now, if \,$T \,\in\, X_{F}^{\,\ast}$\, and \,$U \,\in\, Y^{\,\ast}_{G}$, for all \,$x \,\oplus\, y \,\in\, X \,\oplus\, Y$, we define \,$T \,\oplus\, U \,\in\, Z^{\,\ast}$\, by 
\[(\,T \,\oplus\, U\,)\,\left(\,x \,\oplus\, y,\, a_{\,2} \,\oplus\, b_{\,2},\, \cdots,\, a_{\,n} \,\oplus\, b_{\,n}\,\right)\hspace{3.3cm}\]
\[=\, \left(\,T\,(\,x,\, a_{\,2},\, \cdots,\, a_{\,n}\,),\, U\,(\,y,\, b_{\,2},\, \cdots,\, b_{\,n}\,)\,\right)\; \;\forall\; x \,\in\, X,\; y \,\in\, Y.\]

\begin{theorem}
Let \,$\left\{\,T_{\,i}\,\right\}_{i \,\in\, I} \,\subseteq\, X_{F}^{\,\ast}$\, be a \,$p$-frame associated to \,$\left(\,a_{\,2},\, \cdots,\, a_{\,n}\,\right)$\, for \,$X$\, with bounds \,$A,\,B$\, and \,$\left\{\,U_{\,i}\,\right\}_{i \,\in\, I} \,\subseteq\, Y_{G}^{\,\ast}$\, be a \,$p$-frame associated to \,$\left(\,b_{\,2},\, \cdots,\, b_{\,n}\,\right)$\, for \,$Y$\, with bounds \,$C,\,D$.\,Then the sequence \,$\left\{\,T_{\,i} \,\oplus\, U_{\,i}\,\right\}_{i \,\in\, I}$\, is a \,$p$-frame associated to \,$\left(\,a_{\,2} \,\oplus\, b_{\,2},\, \cdots,\, a_{\,n} \,\oplus\,  b_{\,n}\,\right)$\, for \,$X \,\oplus\, Y$\, with bounds \,$\min\,(\,A,C\,)$\, and \,$\max\,(\,B,\,D\,)$.  
\end{theorem}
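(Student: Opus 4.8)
The plan is to verify the two-sided estimate \eqref{eq1} for the family $\left\{\,T_{\,i} \,\oplus\, U_{\,i}\,\right\}_{i \,\in\, I}$ directly, reducing everything term by term to the hypotheses on $\left\{\,T_{\,i}\,\right\}$ and $\left\{\,U_{\,i}\,\right\}$. First I would record the elementary identity that, for each $i$ and each $x \,\oplus\, y \,\in\, X \,\oplus\, Y$,
\[\left|\,(\,T_{\,i} \,\oplus\, U_{\,i}\,)\,(\,x \,\oplus\, y,\, a_{\,2} \,\oplus\, b_{\,2},\, \cdots,\, a_{\,n} \,\oplus\, b_{\,n}\,)\,\right|^{\,p} \,=\, \left|\,T_{\,i}\,(\,x,\, a_{\,2},\, \cdots,\, a_{\,n}\,)\,\right|^{\,p} \,+\, \left|\,U_{\,i}\,(\,y,\, b_{\,2},\, \cdots,\, b_{\,n}\,)\,\right|^{\,p},\]
which is exactly the convention for the modulus on the codomain that is compatible with the $p$-th power of the $n$-norm on $X \,\oplus\, Y$ introduced just before the statement. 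Summing over $i \,\in\, I$ then splits the series as $\sum_{i}\left|\,T_{\,i}\,(\,x,\, a_{\,2},\, \cdots,\, a_{\,n}\,)\,\right|^{\,p} \,+\, \sum_{i}\left|\,U_{\,i}\,(\,y,\, b_{\,2},\, \cdots,\, b_{\,n}\,)\,\right|^{\,p}$; in particular each piece is finite, so $T_{\,i} \,\oplus\, U_{\,i} \,\in\, Z^{\,\ast}$ and the analysis series is well defined.

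For the upper estimate I would apply the Bessel parts of the two hypotheses to obtain the bound $B\,\left\|\,x,\, a_{\,2},\, \cdots,\, a_{\,n}\,\right\|^{\,p}_{X} \,+\, D\,\left\|\,y,\, b_{\,2},\, \cdots,\, b_{\,n}\,\right\|^{\,p}_{Y}$, then dominate both coefficients by $\max(\,B,\,D\,)$ and use the defining formula of the $n$-norm on $X \,\oplus\, Y$ to recognise $\left\|\,x,\, a_{\,2},\, \cdots,\, a_{\,n}\,\right\|^{\,p}_{X} \,+\, \left\|\,y,\, b_{\,2},\, \cdots,\, b_{\,n}\,\right\|^{\,p}_{Y}$ as $\left\|\,x \,\oplus\, y,\, a_{\,2} \,\oplus\, b_{\,2},\, \cdots,\, a_{\,n} \,\oplus\, b_{\,n}\,\right\|^{\,p}$. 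For the lower estimate I would run the same argument with the lower frame inequalities and the elementary bound $A\,u \,+\, C\,v \,\geq\, \min(\,A,\,C\,)\,(\,u \,+\, v\,)$, valid for $u,\,v \,\geq\, 0$. Concatenating the two chains of inequalities yields
\[\min(\,A,\,C\,)\,\left\|\,x \,\oplus\, y,\, \cdots\,\right\|^{\,p} \,\leq\, \sum\limits_{i \,\in\, I}\,\left|\,(\,T_{\,i} \,\oplus\, U_{\,i}\,)\,(\,x \,\oplus\, y,\, \cdots\,)\,\right|^{\,p} \,\leq\, \max(\,B,\,D\,)\,\left\|\,x \,\oplus\, y,\, \cdots\,\right\|^{\,p},\]
which is the asserted $p$-frame inequality with the stated bounds.

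There is no deep obstacle here: the argument is a termwise splitting of the series followed by two elementary scalar inequalities. The only point that needs a little care is the bookkeeping around the modulus on the codomain $\mathbb{K} \,\times\, \mathbb{K}$ — one must make sure that the convention $\left|\,(\,s,\,t\,)\,\right|^{\,p} \,=\, |\,s\,|^{\,p} \,+\, |\,t\,|^{\,p}$ used in the displayed identity is precisely the one under which $T \,\oplus\, U$ is a bounded $b$-linear functional in $Z^{\,\ast}$ relative to the $n$-norm on $X \,\oplus\, Y$ defined above, so that $\left\{\,T_{\,i} \,\oplus\, U_{\,i}\,\right\}_{i \,\in\, I}$ genuinely lies in $Z^{\,\ast}$; once this normalisation is pinned down, the two chains of inequalities complete the proof.
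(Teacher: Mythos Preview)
Your argument is correct and matches the paper's proof essentially step for step: both split the series into the $T_i$- and $U_i$-parts, apply the individual frame inequalities, and then use $Au+Cv\geq\min(A,C)(u+v)$ and $Bu+Dv\leq\max(B,D)(u+v)$ together with the definition of the $n$-norm on $X\oplus Y$. Your explicit remark about the modulus convention $|(s,t)|^{p}=|s|^{p}+|t|^{p}$ on $\mathbb{K}\times\mathbb{K}$ makes precise a point the paper leaves implicit, but otherwise the two proofs are the same.
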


\begin{proof}
Since \,$\left\{\,T_{\,i}\,\right\}_{i \,\in\, I}$\, is a \,$p$-frame associated to \,$\left(\,a_{\,2},\, \cdots,\, a_{\,n}\,\right)$\, for \,$X$\, and \,$\left\{\,U_{\,i}\,\right\}_{i \,\in\, I}$\, is a \,$p$-frame associated to \,$\left(\,b_{\,2},\, \cdots,\, b_{\,n}\,\right)$\, for \,$Y$, for all \,$x \,\in\, X$\, and \,$y \,\in\, Y$, we have
\begin{equation}\label{eq1.1}
A \,\left\|\,x,\, a_{\,2},\, \cdots,\, a_{\,n}\,\right\|^{\,p}_{X} \,\leq\, \sum\limits_{\,i \,\in\, I}\,\left|\,T_{\,i}\,(\,x,\, a_{\,2},\, \cdots,\, a_{\,n}\,)\,\right|^{\,p} \,\leq\, B\,\left\|\,x,\, a_{\,2},\, \cdots,\, a_{\,n}\,\right\|^{\,p}_{X}
\end{equation}
\begin{equation}\label{eq1.2}
C \,\left\|\,y,\, b_{\,2},\, \cdots,\, b_{\,n}\,\right\|^{\,p}_{Y} \,\leq\, \sum\limits_{\,i \,\in\, I}\,\left|\,U_{\,i}\,(\,y,\, b_{\,2},\, \cdots,\, b_{\,n}\,)\,\right|^{\,p} \,\leq\, D\,\left\|\,y,\, b_{\,2},\, \cdots,\, b_{\,n}\,\right\|^{\,p}_{Y}
\end{equation} 
Adding (\ref{eq1.1}) and (\ref{eq1.2}), we get
\[A \,\left\|\,x,\, a_{\,2},\, \cdots,\, a_{\,n}\,\right\|^{\,p}_{X} \,+\, C \,\left\|\,y,\, b_{\,2},\, \cdots,\, b_{\,n}\,\right\|^{\,p}_{Y}\hspace{4.4cm}\]
\[\leq\, \sum\limits_{\,i \,\in\, I}\,\left|\,T_{\,i}\,(\,x,\, a_{\,2},\, \cdots,\, a_{\,n}\,)\,\right|^{\,p} \,+\, \sum\limits_{\,i \,\in\, I}\,\left|\,U_{\,i}\,(\,y,\, b_{\,2},\, \cdots,\, b_{\,n}\,)\,\right|^{\,p}\hspace{2cm}\]
\[\leq\, B\,\left\|\,x,\, a_{\,2},\, \cdots,\, a_{\,n}\,\right\|^{\,p}_{X} \,+\, D\,\left\|\,y,\, b_{\,2},\, \cdots,\, b_{\,n}\,\right\|^{\,p}_{Y}.\hspace{3.8cm}\]
\[\Rightarrow\, \min\,(\,A,C\,)\,\left\{\,\left\|\,x,\, a_{\,2},\, \cdots,\, a_{\,n}\,\right\|^{\,p}_{X} \,+\, \left\|\,y,\, b_{\,2},\, \cdots,\, b_{\,n}\,\right\|^{\,p}_{Y}\,\right\}\hspace{2.1cm}\]
\[\leq\, \sum\limits_{\,i \,\in\, I}\,\left(\,\left|\,T_{\,i}\,(\,x,\, a_{\,2},\, \cdots,\, a_{\,n}\,)\,\right|^{\,p} \,+\, \left|\,U_{\,i}\,(\,y,\, b_{\,2},\, \cdots,\, b_{\,n}\,)\,\right|^{\,p}\,\right)\hspace{2.3cm}\]
\[\leq\, \max\,(\,B,\,D\,)\,\left\{\,\left\|\,x,\, a_{\,2},\, \cdots,\, a_{\,n}\,\right\|^{\,p}_{X} \,+\, \left\|\,y,\, b_{\,2},\, \cdots,\, b_{\,n}\,\right\|^{\,p}_{Y}\,\right\}.\hspace{1.8cm}\]
\[\Rightarrow\, \min\,(\,A,C\,)\,\left\|\,x \,\oplus\, y,\, a_{\,2} \,\oplus\, b_{\,2},\, \cdots,\, a_{\,n} \,\oplus\, b_{\,n}\,\right\|^{\,p}\hspace{3.6cm}\]
\[\,\leq\, \sum\limits_{\,i \,\in\, I}\,\left|\,(\,T_{\,i} \,\oplus\, U_{\,i}\,)\,\left(\,x \,\oplus\, y,\, a_{\,2} \,\oplus\, b_{\,2},\, \cdots,\, a_{\,n} \,\oplus\, b_{\,n}\,\right)\,\right|^{\,p}\hspace{2.8cm}\]
\[\leq\, \max\,(\,B,\,D\,)\,\left\|\,x \,\oplus\, y,\, a_{\,2} \,\oplus\, b_{\,2},\, \cdots,\, a_{\,n} \,\oplus\, b_{\,n}\,\right\|^{\,p}\; \;\forall\; x \,\oplus\, y \,\in\, X \,\oplus\, Y.\]
This shows that \,$\left\{\,T_{\,i} \,\oplus\, U_{\,i}\,\right\}_{i \,\in\, I}$\, is a \,$p$-frame associated to \,$\left(\,a_{\,2} \,\oplus\, b_{\,2},\, \cdots,\, a_{\,n} \,\oplus\, b_{\,n}\,\right)$\, for \,$X \,\oplus\, Y$\, with bounds \,$\min\,(\,A,C\,)$\, and \,$\max\,(\,B,\,D\,)$.
\end{proof}

\section{Perturbation of $p$-frame in \,$n$-Banach space}

\smallskip\hspace{.6 cm} In this section, perturbation of a \,$p$-frame associated to \,$\left(\,a_{\,2},\, \cdots,\, a_{\,n}\,\right)$\, for \,$X$\, by non-zero bounded\;$b$-linear functionals is presented.

\begin{theorem}
Let \,$\left\{\,T_{\,i}\,\right\}_{i \,\in\, I} \,\subseteq\, X_{F}^{\,\ast}$\, be a \,$p$-frame associated to \,$\left(\,a_{\,2},\, \cdots,\, a_{\,n}\,\right)$\, for \,$X$\, with bounds \,$A$\, and \,$B$.\,Let \,$R \,\neq\, 0$\, be any element in \,$X_{F}^{\,\ast}$\, and \,$\left\{\,c_{\,i}\,\right\}_{i \,\in\, I}$\, be any sequence of scalars.\,Then the perturbed sequence of bounded \,$b$-linear functionals \,$\left\{\,T_{\,i} \,+\, c_{\,i}\,R\,\right\}_{i \,\in\, I}$\, is a \,$p$-frame associated to \,$\left(\,a_{\,2},\, \cdots,\, a_{\,n}\,\right)$\, for \,$X$\, if \,$\sum\limits_{\,i \,\in\, I}\,\left|\,c_{\,i}\,\right|^{\,p} \,<\, \dfrac{A}{\|\,R\,\|^{\,p}}$.   
\end{theorem}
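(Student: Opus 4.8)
The plan is to verify the two‑sided inequality in the definition of a \,$p$-frame directly for the perturbed family \,$\left\{\,T_{\,i} \,+\, c_{\,i}\,R\,\right\}_{i \,\in\, I}$, exploiting Minkowski's inequality in \,$l^{\,p}$\, in both directions. First I would record two preliminary observations: each \,$T_{\,i} \,+\, c_{\,i}\,R$\, again belongs to \,$X_{F}^{\,\ast}$\, because \,$X_{F}^{\,\ast}$\, is a linear space, so the perturbed family is an admissible candidate; and, writing \,$\mu \,=\, \left(\,\sum\limits_{\,i \,\in\, I}\,\left|\,c_{\,i}\,\right|^{\,p}\,\right)^{\,1 \,/\, p}$, the hypothesis says precisely that \,$\mu\,\|\,R\,\| \,<\, A^{\,1 \,/\, p}$, so in particular \,$\left\{\,c_{\,i}\,\right\}_{i \,\in\, I} \,\in\, l^{\,p}$\, and \,$\left\{\,c_{\,i}\,R\,(\,x,\, a_{\,2},\, \cdots,\, a_{\,n}\,)\,\right\}_{i \,\in\, I} \,\in\, l^{\,p}$\, for every \,$x \,\in\, X$.

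For the upper bound I would fix \,$x \,\in\, X$\, and apply the triangle inequality in \,$l^{\,p}$\, to the sequences \,$\left\{\,T_{\,i}\,(\,x,\, a_{\,2},\, \cdots,\, a_{\,n}\,)\,\right\}_{i \,\in\, I}$\, and \,$\left\{\,c_{\,i}\,R\,(\,x,\, a_{\,2},\, \cdots,\, a_{\,n}\,)\,\right\}_{i \,\in\, I}$, obtaining
\[\left(\,\sum\limits_{\,i \,\in\, I}\,\left|\,(\,T_{\,i} \,+\, c_{\,i}\,R\,)\,(\,x,\, a_{\,2},\, \cdots,\, a_{\,n}\,)\,\right|^{\,p}\,\right)^{\,1 \,/\, p} \,\leq\, \left(\,\sum\limits_{\,i \,\in\, I}\,\left|\,T_{\,i}\,(\,x,\, a_{\,2},\, \cdots,\, a_{\,n}\,)\,\right|^{\,p}\,\right)^{\,1 \,/\, p} \,+\, \left|\,R\,(\,x,\, a_{\,2},\, \cdots,\, a_{\,n}\,)\,\right|\,\mu.\]
Then I would bound the first term on the right by \,$B^{\,1 \,/\, p}\,\left\|\,x,\, a_{\,2},\, \cdots,\, a_{\,n}\,\right\|_{X}$\, using the upper \,$p$-frame bound of \,$\left\{\,T_{\,i}\,\right\}$, and bound \,$\left|\,R\,(\,x,\, a_{\,2},\, \cdots,\, a_{\,n}\,)\,\right|$\, by \,$\|\,R\,\|\,\left\|\,x,\, a_{\,2},\, \cdots,\, a_{\,n}\,\right\|_{X}$\, from the definition of the norm of a bounded \,$b$-linear functional. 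This produces the upper \,$p$-frame bound \,$B^{\,\prime} \,=\, \left(\,B^{\,1 \,/\, p} \,+\, \mu\,\|\,R\,\|\,\right)^{\,p}$.

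For the lower bound I would use the reverse triangle inequality in \,$l^{\,p}$, namely \,$\|\,u\,\|_{p} \,\geq\, \|\,v\,\|_{p} \,-\, \|\,u \,-\, v\,\|_{p}$, with \,$u \,=\, \left\{\,(\,T_{\,i} \,+\, c_{\,i}\,R\,)\,(\,x,\, a_{\,2},\, \cdots,\, a_{\,n}\,)\,\right\}_{i \,\in\, I}$\, and \,$v \,=\, \left\{\,T_{\,i}\,(\,x,\, a_{\,2},\, \cdots,\, a_{\,n}\,)\,\right\}_{i \,\in\, I}$, so that \,$u \,-\, v \,=\, \left\{\,c_{\,i}\,R\,(\,x,\, a_{\,2},\, \cdots,\, a_{\,n}\,)\,\right\}_{i \,\in\, I}$; combining this with the lower \,$p$-frame bound of \,$\left\{\,T_{\,i}\,\right\}$\, and again with \,$\left|\,R\,(\,x,\, a_{\,2},\, \cdots,\, a_{\,n}\,)\,\right| \,\leq\, \|\,R\,\|\,\left\|\,x,\, a_{\,2},\, \cdots,\, a_{\,n}\,\right\|_{X}$\, I would get
\[\left(\,\sum\limits_{\,i \,\in\, I}\,\left|\,(\,T_{\,i} \,+\, c_{\,i}\,R\,)\,(\,x,\, a_{\,2},\, \cdots,\, a_{\,n}\,)\,\right|^{\,p}\,\right)^{\,1 \,/\, p} \,\geq\, \left(\,A^{\,1 \,/\, p} \,-\, \mu\,\|\,R\,\|\,\right)\,\left\|\,x,\, a_{\,2},\, \cdots,\, a_{\,n}\,\right\|_{X}.\]
Raising both sides to the \,$p$-th power is legitimate because the assumption \,$\sum\limits_{\,i \,\in\, I}\,\left|\,c_{\,i}\,\right|^{\,p} \,<\, A \,/\, \|\,R\,\|^{\,p}$\, forces \,$A^{\,1 \,/\, p} \,-\, \mu\,\|\,R\,\| \,>\, 0$, and it yields the strictly positive lower \,$p$-frame bound \,$A^{\,\prime} \,=\, \left(\,A^{\,1 \,/\, p} \,-\, \mu\,\|\,R\,\|\,\right)^{\,p}$.

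I do not expect any genuine obstacle: the argument is essentially an application of Minkowski's inequality in \,$l^{\,p}$\, together with the boundedness estimate for \,$R$, and the quantitative hypothesis on \,$\sum\limits_{\,i \,\in\, I}\,\left|\,c_{\,i}\,\right|^{\,p}$\, serves the single purpose of keeping the resulting lower bound strictly positive. The only mild point of care is to ensure that every sequence to which Minkowski's inequality is applied really lies in \,$l^{\,p}$\, — which holds since \,$\left\{\,T_{\,i}\,(\,x,\, a_{\,2},\, \cdots,\, a_{\,n}\,)\,\right\}_{i \,\in\, I} \,\in\, l^{\,p}$\, by the \,$p$-frame property and \,$\left\{\,c_{\,i}\,\right\}_{i \,\in\, I} \,\in\, l^{\,p}$\, by hypothesis — and to invoke the reverse triangle inequality in the correct direction.
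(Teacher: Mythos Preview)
Your argument is correct and complete. It differs from the paper's proof mainly in organization: the paper sets \,$U_{\,i} \,=\, T_{\,i} \,+\, c_{\,i}\,R$, bounds
\[\sum\limits_{\,i \,\in\, I}\,\left|\,\left(\,T_{\,i} \,-\, U_{\,i}\,\right)\,(\,x,\, a_{\,2},\, \cdots,\, a_{\,n}\,)\,\right|^{\,p} \,\leq\, M\,\left\|\,x,\, a_{\,2},\, \cdots,\, a_{\,n}\,\right\|^{\,p}_{X},\quad M \,=\, \|\,R\,\|^{\,p}\,\sum\limits_{\,i \,\in\, I}\,\left|\,c_{\,i}\,\right|^{\,p},\]
and then simply asserts that \,$M \,<\, A$\, forces \,$\left\{\,U_{\,i}\,\right\}$\, to be a \,$p$-frame. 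That last step is exactly the Minkowski\,/\,reverse-Minkowski computation you carried out explicitly, and in the paper it is only justified later (it is the content of the corollary to Theorem~\ref{th2.1} in the stability section). So your route is a direct, self-contained verification of both frame inequalities with the explicit bounds \,$A^{\,\prime} \,=\, \left(\,A^{\,1 \,/\, p} \,-\, \mu\,\|\,R\,\|\,\right)^{\,p}$\, and \,$B^{\,\prime} \,=\, \left(\,B^{\,1 \,/\, p} \,+\, \mu\,\|\,R\,\|\,\right)^{\,p}$, whereas the paper's route is shorter but tacitly invokes a perturbation principle established only afterwards. The underlying analytic content is the same.
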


\begin{proof}
Let \,$U_{\,i} \,=\, T_{\,i} \,+\, c_{\,i}\,R,\, i \,\in\, I$.\,Then for each \,$x \,\in\, X$, we have
\[\sum\limits_{\,i \,\in\, I}\,\left|\,T_{\,i}\,(\,x,\, a_{\,2},\, \cdots,\, a_{\,n}\,) \,-\, U_{\,i}\,(\,x,\, a_{\,2},\, \cdots,\, a_{\,n}\,)\,\right|^{\,p}\hspace{4cm}\]
\[=\, \sum\limits_{\,i \,\in\, I}\,\left|\,c_{\,i}\,R\,(\,x,\, a_{\,2},\, \cdots,\, a_{\,n}\,)\,\right|^{\,p} \,\leq\, \sum\limits_{\,i \,\in\, I}\,\left|\,c_{\,i}\,\right|^{\,p}\,\|\,R\,\|^{\,p}\,\left\|\,x,\, a_{\,2},\, \cdots,\, a_{\,n}\,\right\|^{\,p}_{X}\]
\[=\, M\,\left\|\,x,\, a_{\,2},\, \cdots,\, a_{\,n}\,\right\|^{\,p}_{X},\, \,\text{where}\; \,M \,=\, \sum\limits_{\,i \,\in\, I}\,\left|\,c_{\,i}\,\right|^{\,p}\,\|\,R\,\|^{\,p}.\hspace{2.36cm}\]
Therefore, \,$\left\{\,T_{\,i} \,+\, c_{\,i}\,R\,\right\}_{i \,\in\, I}$\, is a \,$p$-frame associated to \,$\left(\,a_{\,2},\, \cdots,\, a_{\,n}\,\right)$\, if \,$M \,<\, A$, \,i\,.\,e., if \,$\sum\limits_{\,i \,\in\, I}\,\left|\,c_{\,i}\,\right|^{\,p}\,\|\,R\,\|^{\,p} \,<\, A\; \;\Rightarrow\, \sum\limits_{\,i \,\in\, I}\,\left|\,c_{\,i}\,\right|^{\,p} \,<\, \dfrac{A}{\|\,R\,\|^{\,p}}$. 
\end{proof}

\begin{theorem}
Let \,$\left\{\,T_{\,i}\,\right\}_{i \,\in\, I}$\, be a \,$p$-frame associated to \,$\left(\,a_{\,2},\, \cdots,\, a_{\,n}\,\right)$\, for \,$X$\, with bounds \,$A,\,B$\, and \,$\left\{\,R_{\,i}\,\right\}_{i \,\in\, I} \,\subseteq\, X_{F}^{\,\ast}$\, be any sequence.\,Let \,$\left\{\,\alpha_{\,i}\,\right\}_{i \,\in\, I},\;\left\{\,\beta_{\,i}\,\right\}_{i \,\in\, I} \,\subset\, \mathbb{R}$\, be any two positively confined sequence such that
\[\sum\limits_{\,i \,\in\, I}\,\left|\,\alpha_{\,i}\,T_{\,i}\,(\,x,\, a_{\,2},\, \cdots,\, a_{\,n}\,) \,-\, \beta_{\,i}\,R_{\,i}\,(\,x,\, a_{\,2},\, \cdots,\, a_{\,n}\,)\,\right|^{\,p}\]
\[\,\leq\, \lambda\,\sum\limits_{\,i \,\in\, I}\,\left|\,\alpha_{\,i}\,T_{\,i}\,(\,x,\, a_{\,2},\, \cdots,\, a_{\,n}\,)\,\right|^{\,p} \,+\, \mu\,\sum\limits_{\,i \,\in\, I}\,\left|\,\beta_{\,i}\,R_{\,i}\,(\,x,\, a_{\,2},\, \cdots,\, a_{\,n}\,)\,\right|^{\,p},\, \,x \,\in\, X,\]
where \,$\lambda,\,\mu$\, are constants with \,$0 \,\leq\, \lambda,\,\mu \,<\, \dfrac{1}{2^{\,p}}$.\,Then \,$\left\{\,R_{\,i}\,\right\}_{i \,\in\, I}$\, is a \,$p$-frame associated to \,$\left(\,a_{\,2},\, \cdots,\, a_{\,n}\,\right)$\, for \,$X$.   
\end{theorem}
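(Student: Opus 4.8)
The plan is to read off both $p$-frame inequalities for \,$\left\{\,R_{\,i}\,\right\}_{i \,\in\, I}$\, directly from the perturbation hypothesis, applying the elementary inequality $(\ref{eqq1})$ twice: once arranged so that \,$\left|\,\beta_{\,i}R_{\,i}(x)\,\right|^{\,p}$\, is the leading term, and once so that \,$\left|\,\alpha_{\,i}T_{\,i}(x)\,\right|^{\,p}$\, is the leading term. Here and below I abbreviate \,$T_{\,i}(x,\,a_{\,2},\,\cdots,\,a_{\,n})$, $R_{\,i}(x,\,a_{\,2},\,\cdots,\,a_{\,n})$\, and \,$\left\|\,x,\,a_{\,2},\,\cdots,\,a_{\,n}\,\right\|_{X}$\, by \,$T_{\,i}(x)$, $R_{\,i}(x)$\, and \,$\|\,x\,\|$, respectively, and \,$x$\, is an arbitrary element of \,$X$. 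Since \,$\left\{\,\alpha_{\,i}\,\right\}_{i \,\in\, I}$\, and \,$\left\{\,\beta_{\,i}\,\right\}_{i \,\in\, I}$\, are positively confined, fix constants with \,$0 \,<\, a \,\leq\, \alpha_{\,i} \,\leq\, b$\, and \,$0 \,<\, c \,\leq\, \beta_{\,i} \,\leq\, d$\, for all \,$i \,\in\, I$; these let me pass between \,$\sum_{i}\left|\,\alpha_{\,i}T_{\,i}(x)\,\right|^{\,p}$\, and \,$\sum_{i}\left|\,T_{\,i}(x)\,\right|^{\,p}$, and between \,$\sum_{i}\left|\,\beta_{\,i}R_{\,i}(x)\,\right|^{\,p}$\, and \,$\sum_{i}\left|\,R_{\,i}(x)\,\right|^{\,p}$.

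\emph{Upper bound.} In $(\ref{eqq1})$ take \,$a \,=\, \beta_{\,i}R_{\,i}(x) \,-\, \alpha_{\,i}T_{\,i}(x)$\, and \,$b \,=\, \alpha_{\,i}T_{\,i}(x)$, so the left side is \,$\left|\,\beta_{\,i}R_{\,i}(x)\,\right|^{\,p}$; summing over \,$i$\, and invoking the hypothesis gives
\[\sum_{i \,\in\, I}\left|\,\beta_{\,i}R_{\,i}(x)\,\right|^{\,p} \,\leq\, 2^{\,p}\,(\,1 \,+\, \lambda\,)\sum_{i \,\in\, I}\left|\,\alpha_{\,i}T_{\,i}(x)\,\right|^{\,p} \,+\, 2^{\,p}\,\mu\sum_{i \,\in\, I}\left|\,\beta_{\,i}R_{\,i}(x)\,\right|^{\,p}.\]
As \,$\mu \,<\, 1 \,/\, 2^{\,p}$, the coefficient \,$1 \,-\, 2^{\,p}\mu$\, is positive, so I transpose the last term and divide by it; then using \,$\sum_{i}\left|\,\beta_{\,i}R_{\,i}(x)\,\right|^{\,p} \,\geq\, c^{\,p}\sum_{i}\left|\,R_{\,i}(x)\,\right|^{\,p}$\, on the left and \,$\sum_{i}\left|\,\alpha_{\,i}T_{\,i}(x)\,\right|^{\,p} \,\leq\, b^{\,p}\sum_{i}\left|\,T_{\,i}(x)\,\right|^{\,p} \,\leq\, b^{\,p}\,B\,\|\,x\,\|^{\,p}$\, on the right (the last step by the upper $p$-frame bound of \,$\left\{\,T_{\,i}\,\right\}$) yields the upper $p$-frame inequality for \,$\left\{\,R_{\,i}\,\right\}$\, with bound \,$2^{\,p}\,(\,1 + \lambda\,)\,b^{\,p}\,B \,/\, \left(\,c^{\,p}\,(\,1 - 2^{\,p}\mu\,)\,\right)$. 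In particular \,$\sum_{i}\left|\,R_{\,i}(x)\,\right|^{\,p} \,<\, \infty$\, for each \,$x$, so \,$\left\{\,R_{\,i}\,\right\}$\, is a $p$-Bessel sequence.

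\emph{Lower bound.} Now swap the two quantities: in $(\ref{eqq1})$ take \,$a \,=\, \alpha_{\,i}T_{\,i}(x) \,-\, \beta_{\,i}R_{\,i}(x)$\, and \,$b \,=\, \beta_{\,i}R_{\,i}(x)$, sum over \,$i$, and use the hypothesis to obtain
\[(\,1 \,-\, 2^{\,p}\lambda\,)\sum_{i \,\in\, I}\left|\,\alpha_{\,i}T_{\,i}(x)\,\right|^{\,p} \,\leq\, 2^{\,p}\,(\,1 \,+\, \mu\,)\sum_{i \,\in\, I}\left|\,\beta_{\,i}R_{\,i}(x)\,\right|^{\,p},\]
where \,$1 \,-\, 2^{\,p}\lambda \,>\, 0$\, since \,$\lambda \,<\, 1 \,/\, 2^{\,p}$. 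Bounding the left side below by \,$a^{\,p}\sum_{i}\left|\,T_{\,i}(x)\,\right|^{\,p} \,\geq\, a^{\,p}\,A\,\|\,x\,\|^{\,p}$\, (using the lower $p$-frame bound of \,$\left\{\,T_{\,i}\,\right\}$) and the right side above by \,$d^{\,p}\sum_{i}\left|\,R_{\,i}(x)\,\right|^{\,p}$, then rearranging, gives the lower $p$-frame inequality for \,$\left\{\,R_{\,i}\,\right\}$\, with the strictly positive bound \,$a^{\,p}\,A\,(\,1 - 2^{\,p}\lambda\,) \,/\, \left(\,2^{\,p}\,(\,1 + \mu\,)\,d^{\,p}\,\right)$. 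Together with the upper bound this shows \,$\left\{\,R_{\,i}\,\right\}_{i \,\in\, I}$\, is a $p$-frame associated to \,$(\,a_{\,2},\,\cdots,\,a_{\,n}\,)$\, for \,$X$.

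\emph{Where the work lies.} There is no genuine obstacle; the whole content is the symmetric use of $(\ref{eqq1})$ — bounding \,$\sum_{i}\left|\,\beta_{\,i}R_{\,i}(x)\,\right|^{\,p}$\, against \,$\sum_{i}\left|\,\alpha_{\,i}T_{\,i}(x)\,\right|^{\,p}$\, and conversely — together with the observation that \,$\lambda,\,\mu \,<\, 1 \,/\, 2^{\,p}$\, is exactly what keeps the self-absorbed coefficients \,$1 - 2^{\,p}\mu$\, and \,$1 - 2^{\,p}\lambda$\, positive. The one point requiring care is the order of operations: one must absorb the \,$\sum_{i}\left|\,\beta_{\,i}R_{\,i}(x)\,\right|^{\,p}$\, (respectively \,$\sum_{i}\left|\,\alpha_{\,i}T_{\,i}(x)\,\right|^{\,p}$) term \emph{before} inserting the frame bounds \,$A,\,B$\, of \,$\left\{\,T_{\,i}\,\right\}$\, and the confinement constants \,$a,\,b,\,c,\,d$, since performing these steps in the wrong order entangles \,$\lambda,\,\mu$\, with the confinement constants and would demand a stronger smallness hypothesis to salvage positivity of the lower bound.
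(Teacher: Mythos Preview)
Your proof is correct and follows essentially the same route as the paper's: both apply inequality~(\ref{eqq1}) twice (once to isolate $\sum_i|\beta_iR_i(x)|^p$, once to isolate $\sum_i|\alpha_iT_i(x)|^p$), absorb the self-referential term using $\lambda,\mu<1/2^{\,p}$, and then pass to $\sum_i|R_i(x)|^p$ and $\sum_i|T_i(x)|^p$ via the confinement bounds before invoking the frame bounds $A,B$ of $\{T_i\}$. Your constants match the paper's (with $a=\inf_i\alpha_i$, $b=\sup_i\alpha_i$, $c=\inf_i\beta_i$, $d=\sup_i\beta_i$ playing the roles of the paper's $M_1,N,M,N_1$); the only cosmetic issue is that you reuse the letters $a,b$ for both the confinement constants and the dummy variables in~(\ref{eqq1}), which is harmless but worth renaming.
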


\begin{proof} 
For each \,$x \,\in\, X$, we have
\[\sum\limits_{\,i \,\in\, I}\,\left|\,\beta_{\,i}\,R_{\,i}\,(\,x,\, a_{\,2},\, \cdots,\, a_{\,n}\,)\,\right|^{\,p} \,\leq\, 2^{\,p}\sum\limits_{\,i \,\in\, I}\,\left|\,\alpha_{\,i}\,T_{\,i}\,(\,x,\, a_{\,2},\, \cdots,\, a_{\,n}\,)\,\right|^{\,p} \,+\hspace{2.2cm}\]
\[+\, 2^{\,p}\sum\limits_{\,i \,\in\, I}\,\left|\,\alpha_{\,i}\,T_{\,i}\,(\,x,\, a_{\,2},\, \cdots,\, a_{\,n}\,) \,-\, \beta_{\,i}\,R_{\,i}\,(\,x,\, a_{\,2},\, \cdots,\, a_{\,n}\,)\,\right|^{\,p}\;[\;\text{by $(\ref{eqq1})$}\;]\]
\[\leq\, 2^{\,p}\,\left(\,1 \,+\, \lambda\,\right)\,\sum\limits_{\,i \,\in\, I}\,\left|\,\alpha_{\,i}\,T_{\,i}\,(\,x,\, a_{\,2},\, \cdots,\, a_{\,n}\,)\,\right|^{\,p} \,+\, 2^{\,p}\,\mu\,\sum\limits_{\,i \,\in\, I}\,\left|\,\beta_{\,i}\,R_{\,i}\,(\,x,\, a_{\,2},\, \cdots,\, a_{\,n}\,)\,\right|^{\,p}.\]
This implies that
\[\left(\,1 - 2^{\,p}\,\mu\,\right)M^{\,p}\sum\limits_{\,i \,\in\, I}\left|\,R_{\,i}\,(\,x,\, a_{\,2},\, \cdots,\, a_{\,n}\,)\,\right|^{\,p} \leq 2^{\,p}\,\left(\,1 + \lambda\,\right)N^{\,p}\sum\limits_{\,i \,\in\, I}\left|\,T_{\,i}\,(\,x,\, a_{\,2},\, \cdots,\, a_{\,n}\,)\,\right|^{\,p}\]
where \,$M \,=\, \inf\limits_{\,i}\,\beta_{\,i}$\, and \,$N \,=\, \sup\limits_{\,i}\,\alpha_{\,i}$\, and therefore we can write
\[\sum\limits_{\,i \,\in\, I}\,\left|\,R_{\,i}\,(\,x,\, a_{\,2},\, \cdots,\, a_{\,n}\,)\,\right|^{\,p} \,\leq\, \dfrac{2^{\,p}\,\left(\,1 + \lambda\,\right)\,N^{\,p}}{\left(\,1 - 2^{\,p}\,\mu\,\right)\,M^{\,p}}\,\sum\limits_{\,i \,\in\, I}\,\left|\,T_{\,i}\,(\,x,\, a_{\,2},\, \cdots,\, a_{\,n}\,)\,\right|^{\,p}\hspace{2cm}\]
\begin{equation}\label{eq1.3}
\hspace{3cm}=\, \dfrac{2^{\,p}\,\left(\,1 + \lambda\,\right)\,N^{\,p}}{\left(\,1 - 2^{\,p}\,\mu\,\right)\,M^{\,p}}\,B\,\left\|\,x,\, a_{\,2},\, \cdots,\, a_{\,n}\,\right\|^{\,p}_{X}\; \;\forall\; x \,\in\, X. 
\end{equation}
\[\bigg[\,\text{since \,$\left\{\,T_{\,i}\,\right\}_{i \,\in\, I}$\, is a \,$p$-frame associated to \,$\left(\,a_{\,2},\, \cdots,\, a_{\,n}\,\right)$}\,\bigg].\]
On the other hand, for each \,$x \,\in\, X$, we have 
\[\sum\limits_{\,i \,\in\, I}\,\left|\,\alpha_{\,i}\,T_{\,i}\,(\,x,\, a_{\,2},\, \cdots,\, a_{\,n}\,)\,\right|^{\,p}\hspace{9cm}\]
\[\leq\, 2^{\,p}\,\left(\,\sum\limits_{\,i \,\in\, I}\,\left|\,\left(\,\alpha_{\,i}\,T_{\,i} \,-\, \beta_{\,i}\,R_{\,i}\,\right)\,(\,x,\, a_{\,2},\, \cdots,\, a_{\,n}\,)\,\right|^{\,p} \,+\, \sum\limits_{\,i \,\in\, I}\,\left|\,\beta_{\,i}\,R_{\,i}\,(\,x,\, a_{\,2},\, \cdots,\, a_{\,n}\,)\,\right|^{\,p}\,\right)\]
\[\leq\, 2^{\,p}\,\lambda\,\sum\limits_{\,i \,\in\, I}\,\left|\,\alpha_{\,i}\,T_{\,i}\,(\,x,\, a_{\,2},\, \cdots,\, a_{\,n}\,)\,\right|^{\,p} \,+\, 2^{\,p}\,\left(\,1 + \mu\,\right)\,\sum\limits_{\,i \,\in\, I}\,\left|\,\beta_{\,i}\,R_{\,i}\,(\,x,\, a_{\,2},\, \cdots,\, a_{\,n}\,)\,\right|^{\,p}.\]Thus
\[\left(\,1 - 2^{\,p}\,\lambda\,\right)M_{1}^{\,p}\sum\limits_{\,i \,\in\, I}\left|\,T_{\,i}\,(\,x,\, a_{\,2},\, \cdots,\, a_{\,n}\,)\,\right|^{\,p} \leq 2^{\,p}\,\left(\,1 + \mu\,\right)N_{1}^{\,p}\sum\limits_{\,i \,\in\, I}\left|\,R_{\,i}\,(\,x,\, a_{\,2},\, \cdots,\, a_{\,n}\,)\,\right|^{\,p}\]
where \,$M_{1} \,=\, \inf\limits_{\,i}\,\alpha_{\,i}$\, and \,$N_{1} \,=\, \sup\limits_{\,i}\,\beta_{\,i}$.
\begin{equation}\label{eq1.4}
\Rightarrow\, \dfrac{\left(\,1 - 2^{\,p}\,\lambda\,\right)\,M_{1}^{\,p}}{2^{\,p}\left(\,1 + \mu\,\right)\,N_{1}^{\,p}}\,A\,\left\|\,x,\, a_{\,2},\, \cdots,\, a_{\,n}\,\right\|^{\,p}_{X} \,\leq\, \sum\limits_{\,i \,\in\, I}\,\left|\,R_{\,i}\,(\,x,\, a_{\,2},\, \cdots,\, a_{\,n}\,)\,\right|^{\,p}.
\end{equation}
From (\ref{eq1.3}) and (\ref{eq1.4}), it follows that \,$\left\{\,R_{\,i}\,\right\}_{i \,\in\, I}$\, is a \,$p$-frame associated to \,$\left(\,a_{\,2},\, \cdots,\, a_{\,n}\,\right)$\, for \,$X$. 
\end{proof}

\section{Stability of $p$-frame in \,$n$-Banach space}

In this section, we present the stability of \,$p$-frame associated to \,$\left(\,a_{\,2},\, \cdots,\, a_{\,n}\,\right)$\, in \,$n$-Banach space and prove that \,$p$-frames associated to \,$\left(\,a_{\,2},\, \cdots,\, a_{\,n}\,\right)$\, are stable under some perturbations.  

\begin{theorem}\label{th2.1}
Let \,$\left\{\,T_{\,i}\,\right\}_{i \,\in\, I}$\, be a \,$p$-frame associated to \,$\left(\,a_{\,2},\, \cdots,\, a_{\,n}\,\right)$\, for \,$X$\, with bounds \,$A,\,B$\, and \,$\left\{\,R_{\,i}\,\right\}_{i \,\in\, I} \,\subseteq\, X_{F}^{\,\ast}$\, be any sequence such that
\[\sum\limits_{\,i \,\in\, I}\,\left|\,T_{\,i}\,(\,x,\, a_{\,2},\, \cdots,\, a_{\,n}\,) \,-\, R_{\,i}\,(\,x,\, a_{\,2},\, \cdots,\, a_{\,n}\,)\,\right|^{\,p}\]
\[\,\leq\, \alpha\,\sum\limits_{\,i \,\in\, I}\,\left|\,T_{\,i}\,(\,x,\, a_{\,2},\, \cdots,\, a_{\,n}\,)\,\right|^{\,p} \,+\, \beta\,\left\|\,x,\, a_{\,2},\, \cdots,\, a_{\,n}\,\right\|^{\,p}_{X},\, \,x \,\in\, X,\]
where \,$\alpha,\,\beta \,\geq\, 0$\, with \,$0 \,\leq\, \alpha \,+\, \dfrac{\beta}{A} \,<\, 1$.\,Then \,$\left\{\,R_{\,i}\,\right\}_{i \,\in\, I}$\, is a \,$p$-frame associated to \,$\left(\,a_{\,2},\, \cdots,\, a_{\,n}\,\right)$\, for \,$X$. 
\end{theorem}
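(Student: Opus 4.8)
The plan is to mimic the classical Paley--Wiener / Christensen-type perturbation argument, adapted to the $p$-norm setting. The goal is to produce positive constants $A'$ and $B'$ such that
\[
A'\,\left\|\,x,\, a_{\,2},\, \cdots,\, a_{\,n}\,\right\|^{\,p}_{X} \,\leq\, \sum\limits_{\,i \,\in\, I}\,\left|\,R_{\,i}\,(\,x,\, a_{\,2},\, \cdots,\, a_{\,n}\,)\,\right|^{\,p} \,\leq\, B'\,\left\|\,x,\, a_{\,2},\, \cdots,\, a_{\,n}\,\right\|^{\,p}_{X}
\]
for all $x \,\in\, X$. First I would fix $x \,\in\, X$ and abbreviate $S_T \,=\, \left(\,\sum_{i}\,\left|\,T_{\,i}\,(\,x,\, a_{\,2},\, \cdots,\, a_{\,n}\,)\,\right|^{\,p}\,\right)^{1/p}$, $S_R \,=\, \left(\,\sum_{i}\,\left|\,R_{\,i}\,(\,x,\, a_{\,2},\, \cdots,\, a_{\,n}\,)\,\right|^{\,p}\,\right)^{1/p}$, and $S_{T-R}$ for the $\ell^{\,p}$-norm of the difference sequence. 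The hypothesis says $S_{T-R}^{\,p} \,\leq\, \alpha\,S_T^{\,p} \,+\, \beta\,\|\,x,\, a_{\,2},\, \cdots,\, a_{\,n}\,\|_X^{\,p}$; I would first record the immediate consequence that $\left\{\,R_{\,i}\,\right\}$ is a $p$-Bessel sequence: by the triangle inequality in $\ell^{\,p}$, $S_R \,\leq\, S_T \,+\, S_{T-R}$, hence $S_R^{\,p} \,\leq\, 2^{\,p}\,(\,S_T^{\,p} \,+\, S_{T-R}^{\,p}\,) \,\leq\, 2^{\,p}\,(\,(\,1 \,+\, \alpha\,)\,S_T^{\,p} \,+\, \beta\,\|x,a_2,\dots,a_n\|_X^{\,p}\,) \,\leq\, 2^{\,p}\,(\,(\,1 \,+\, \alpha\,)\,B \,+\, \beta\,)\,\|\,x,\, a_{\,2},\, \cdots,\, a_{\,n}\,\|_X^{\,p}$ using the upper $p$-frame bound of $\left\{\,T_{\,i}\,\right\}$ and (\ref{eqq1}). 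This gives the upper bound $B' \,=\, 2^{\,p}\,(\,(\,1 \,+\, \alpha\,)\,B \,+\, \beta\,)$.

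For the lower bound, the reverse triangle inequality in $\ell^{\,p}$ gives $S_R \,\geq\, S_T \,-\, S_{T-R}$. Using the lower $p$-frame condition $S_T \,\geq\, A^{1/p}\,\|\,x,\, a_{\,2},\, \cdots,\, a_{\,n}\,\|_X$ together with $S_{T-R} \,\leq\, \left(\,\alpha\,S_T^{\,p} \,+\, \beta\,\|x,a_2,\dots,a_n\|_X^{\,p}\,\right)^{1/p} \,\leq\, \alpha^{1/p}\,S_T \,+\, \beta^{1/p}\,\|x,a_2,\dots,a_n\|_X$ (subadditivity of $t \mapsto t^{1/p}$), I get $S_R \,\geq\, \bigl(\,1 \,-\, \alpha^{1/p}\,\bigr)\,S_T \,-\, \beta^{1/p}\,\|x,a_2,\dots,a_n\|_X \,\geq\, \bigl(\,(\,1 \,-\, \alpha^{1/p}\,)\,A^{1/p} \,-\, \beta^{1/p}\,\bigr)\,\|\,x,\, a_{\,2},\, \cdots,\, a_{\,n}\,\|_X$. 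So the lower bound is $A' \,=\, \bigl(\,(\,1 \,-\, \alpha^{1/p}\,)\,A^{1/p} \,-\, \beta^{1/p}\,\bigr)^{\,p}$, provided the bracket is positive, i.e. $(\,1 \,-\, \alpha^{1/p}\,)\,A^{1/p} \,>\, \beta^{1/p}$, equivalently $\alpha^{1/p} \,+\, (\,\beta/A\,)^{1/p} \,<\, 1$.

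The main obstacle I anticipate is reconciling the hypothesis $0 \,\leq\, \alpha \,+\, \beta/A \,<\, 1$ as stated in the theorem with the condition $\alpha^{1/p} \,+\, (\,\beta/A\,)^{1/p} \,<\, 1$ that the naive $\ell^{\,p}$-subadditivity argument produces, since for $p \,>\, 1$ the latter is strictly stronger. There are two ways around this: either one treats the difference and $T$-sum jointly inside a single $\ell^{\,p}$ sum from the start (writing $R_i(x,\dots) \,=\, T_i(x,\dots) \,-\, (\,T_i(x,\dots) \,-\, R_i(x,\dots)\,)$ and summing before extracting $p$-th roots, using $\left|\,T_i(x,\dots)\,\right|^{\,p} \,\leq\, 2^{\,p-1}\,(\,\left|\,R_i(x,\dots)\,\right|^{\,p} \,+\, \left|\,T_i(x,\dots) \,-\, R_i(x,\dots)\,\right|^{\,p}\,)$, i.e. the convexity inequality with constant $2^{\,p-1}$ rather than (\ref{eqq1})'s $2^{\,p}$), which yields $\sum_i \left|\,T_i(x,\dots)\,\right|^{\,p} \,\leq\, 2^{\,p-1}\,(\,\sum_i \left|\,R_i(x,\dots)\,\right|^{\,p} \,+\, \alpha\,\sum_i \left|\,T_i(x,\dots)\,\right|^{\,p} \,+\, \beta\,\|x,\dots\|_X^{\,p}\,)$, giving a lower frame bound of the form $A' \,=\, \dfrac{1}{2^{\,p-1}}\,\bigl(\,(\,1 \,-\, 2^{\,p-1}\,\alpha\,)\,A \,-\, 2^{\,p-1}\,\beta\,\bigr)$ under a corresponding smallness condition; or one accepts the condition $\bigl(\,(\,1 \,-\, \alpha^{1/p}\,)\,A^{1/p} \,-\, \beta^{1/p}\,\bigr)^{\,p} \,>\, 0$ as the operative one. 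I expect the paper follows the first, $\ell^{\,p}$-triangle-inequality route and obtains $A' \,=\, A\,\bigl(\,1 \,-\, \sqrt[p]{\alpha \,+\, \beta/A}\,\bigr)^{\,p}$ or a similar closed form; in the write-up I would carry out exactly this estimate, being careful to isolate $\left(\,\sum_i\left|\,R_i(x,\dots)\,\right|^{\,p}\,\right)^{1/p}$ on one side via the reverse $\ell^{\,p}$-triangle inequality, and to check at the end that the hypothesis on $\alpha,\beta$ forces the resulting lower bound to be strictly positive and the reconstruction/Bessel side to be finite.
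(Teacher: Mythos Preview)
Your overall strategy---Minkowski/triangle inequality in $\ell^{\,p}$, then the frame bounds of $\{T_i\}$---is exactly the paper's route, and your guessed lower bound $A' \,=\, A\bigl(1 - (\alpha + \beta/A)^{1/p}\bigr)^{p}$ is in fact what the paper obtains. But you did not see the one-line step that produces it and resolves your ``main obstacle''. Instead of splitting $\bigl(\alpha\,S_T^{\,p} + \beta\,\|x,a_2,\dots,a_n\|_X^{\,p}\bigr)^{1/p}$ via subadditivity of $t\mapsto t^{1/p}$ (which, as you correctly note, forces the stronger condition $\alpha^{1/p} + (\beta/A)^{1/p} < 1$), the paper first uses the \emph{lower} $p$-frame bound of $\{T_i\}$ inside the root:
\[
\beta\,\|x,a_2,\dots,a_n\|_X^{\,p} \,\leq\, \frac{\beta}{A}\sum_{i\in I}\bigl|T_i(x,a_2,\dots,a_n)\bigr|^{\,p},
\]
so that $S_{T-R} \,\leq\, \bigl(\alpha + \beta/A\bigr)^{1/p}\,S_T$ directly. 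Then the reverse triangle inequality gives
\[
S_R \,\geq\, S_T - S_{T-R} \,\geq\, \Bigl(1 - \bigl(\alpha + \tfrac{\beta}{A}\bigr)^{1/p}\Bigr)\,S_T \,\geq\, \Bigl(1 - \bigl(\alpha + \tfrac{\beta}{A}\bigr)^{1/p}\Bigr)\,A^{1/p}\,\|x,a_2,\dots,a_n\|_X,
\]
which is positive precisely under the stated hypothesis $\alpha + \beta/A < 1$. Neither your subadditivity detour nor the $2^{\,p-1}$-convexity alternative is needed.

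For the upper bound the paper again uses Minkowski rather than the crude $|a+b|^{\,p}\leq 2^{\,p}(|a|^{\,p}+|b|^{\,p})$, obtaining the sharper constant $B' \,=\, \bigl((\alpha B + \beta)^{1/p} + B^{1/p}\bigr)^{p}$; your $B' \,=\, 2^{\,p}\bigl((1+\alpha)B + \beta\bigr)$ is valid but looser.
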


\begin{proof}
For each \,$x \,\in\, X$, by Minkowski inequality, we have 
\[\left(\,\sum\limits_{\,i \,\in\, I}\,\left|\,T_{\,i}\,(\,x,\, a_{\,2},\, \cdots,\, a_{\,n}\,)\,\right|^{\,p}\,\right)^{\,1 \,/\, p}\hspace{9cm}\]
\[\,\leq\, \left(\,\sum\limits_{\,i \,\in\, I}\,\left|\,\left(\,T_{\,i} \,-\, R_{\,i}\,\right)\,(\,x,\, a_{\,2},\, \cdots,\, a_{\,n}\,)\,\right|^{\,p}\,\right)^{\,1 \,/\, p} \,+\, \left(\,\sum\limits_{\,i \,\in\, I}\,\left|\,R_{\,i}\,(\,x,\, a_{\,2},\, \cdots,\, a_{\,n}\,)\,\right|^{\,p}\,\right)^{\,1 \,/\, p}\]
\[\leq \left(\alpha \sum\limits_{\,i \,\in\, I} \left|\,T_{\,i}\,(\,x,\, a_{\,2},\, \cdots,\, a_{\,n}\,)\,\right|^{\,p} \,+\, \beta\,\left\|\,x,\, a_{\,2},\, \cdots,\, a_{\,n}\,\right\|^{\,p}_{X}\right)^{\,1 \,/\, p} \,+\, \left(\sum\limits_{\,i \,\in\, I}\left|\,R_{\,i}\,(\,x,\, a_{\,2},\, \cdots,\, a_{\,n}\,)\,\right|^{\,p}\right)^{\,1 \,/\, p}\]
\[\leq\, \left(\,\left(\,\alpha \,+\, \dfrac{\beta}{A}\,\right)\,\sum\limits_{\,i \,\in\, I}\,\left|\,T_{\,i}\,(\,x,\, a_{\,2},\, \cdots,\, a_{\,n}\,)\,\right|^{\,p}\,\right)^{\,1 \,/\, p} \,+\, \left(\,\sum\limits_{\,i \,\in\, I}\,\left|\,R_{\,i}\,(\,x,\, a_{\,2},\, \cdots,\, a_{\,n}\,)\,\right|^{\,p}\,\right)^{\,1 \,/\, p}\]
\[\Rightarrow\, \left(\,1 \,-\, \left(\,\alpha \,+\, \dfrac{\beta}{A}\,\right)^{\,1 \,/\, p}\,\right)^{\,p}\,\sum\limits_{\,i \,\in\, I}\,\left|\,T_{\,i}\,(\,x,\, a_{\,2},\, \cdots,\, a_{\,n}\,)\,\right|^{\,p} \,\leq\, \sum\limits_{\,i \,\in\, I}\,\left|\,R_{\,i}\,(\,x,\, a_{\,2},\, \cdots,\, a_{\,n}\,)\,\right|^{\,p}\]
\[\Rightarrow\,\left(\,1 \,-\, \left(\,\alpha \,+\, \dfrac{\beta}{A}\,\right)^{\,1 \,/\, p}\,\right)^{\,p}\,A\,\left\|\,x,\, a_{\,2},\, \cdots,\, a_{\,n}\,\right\|^{\,p}_{X} \,\leq\, \sum\limits_{\,i \,\in\, I}\,\left|\,R_{\,i}\,(\,x,\, a_{\,2},\, \cdots,\, a_{\,n}\,)\,\right|^{\,p}.\]
On the other hand, for each \,$x\,\in\, X$, we have 
\[\left(\,\sum\limits_{\,i \,\in\, I}\,\left|\,R_{\,i}\,(\,x,\, a_{\,2},\, \cdots,\, a_{\,n}\,)\,\right|^{\,p}\,\right)^{\,1 \,/\, p}\hspace{9cm}\]
\[\,\leq\, \left(\,\sum\limits_{\,i \,\in\, I}\,\left|\,\left(\,T_{\,i} \,-\, R_{\,i}\,\right)\,(\,x,\, a_{\,2},\, \cdots,\, a_{\,n}\,)\,\right|^{\,p}\,\right)^{\,1 \,/\, p} \,+\, \left(\,\sum\limits_{\,i \,\in\, I}\,\left|\,T_{\,i}\,(\,x,\, a_{\,2},\, \cdots,\, a_{\,n}\,)\,\right|^{\,p}\,\right)^{\,1 \,/\, p}\]
\[\,\leq\, \left(\,\alpha\,\sum\limits_{\,i \,\in\, I}\,\left|\,T_{\,i}\,(\,x,\, a_{\,2},\, \cdots,\, a_{\,n}\,)\,\right|^{\,p} \,+\, \beta\,\left\|\,x,\, a_{\,2},\, \cdots,\, a_{\,n}\,\right\|^{\,p}_{X}\,\right)^{\,1 \,/\, p} \,+\, B^{\,1 \,/\, p}\,\left\|\,x,\, a_{\,2},\, \cdots,\, a_{\,n}\,\right\|_{X}\]
\[\leq\, \left(\,\left(\,\alpha\,\,B \,+\, \beta\,\right)^{\,1 \,/\, p} \,+\, B^{\,1 \,/\, p}\,\right)\,\left\|\,x,\, a_{\,2},\, \cdots,\, a_{\,n}\,\right\|_{X}\; \;\forall\; x\,\in\, X.\hspace{3.45cm}\]
\[\Rightarrow \sum\limits_{\,i \,\in\, I}\left|\,R_{\,i}\,(\,x,\, a_{\,2},\, \cdots,\, a_{\,n}\,)\,\right|^{\,p} \leq \left(\,\left(\,\alpha\,\,B \,+\, \beta\,\right)^{\,1 \,/\, p} \,+\, B^{\,1 \,/\, p}\,\right)^{\,p}\,\left\|\,x,\, a_{\,2},\, \cdots,\, a_{\,n}\,\right\|^{\,p}_{X}.\]
Therefore, \,$\left\{\,R_{\,i}\,\right\}_{i \,\in\, I}$\, is a \,$p$-frame associated to \,$\left(\,a_{\,2},\, \cdots,\, a_{\,n}\,\right)$\, for \,$X$. 
\end{proof}

\begin{corollary}
Let \,$\left\{\,T_{\,i}\,\right\}_{i \,\in\, I}$\, be a \,$p$-frame associated to \,$\left(\,a_{\,2},\, \cdots,\, a_{\,n}\,\right)$\, for \,$X$\, with bounds \,$A,\,B$\, and \,$\left\{\,R_{\,i}\,\right\}_{i \,\in\, I} \,\subseteq\, X_{F}^{\,\ast}$\, be any sequence such that
\[\sum\limits_{\,i \,\in\, I}\,\left|\,\left(\,T_{\,i} \,-\, R_{\,i}\,\right)\,(\,x,\, a_{\,2},\, \cdots,\, a_{\,n}\,)\,\right|^{\,p} \,\leq\, R\,\left\|\,x,\, a_{\,2},\, \cdots,\, a_{\,n}\,\right\|^{\,p}_{X}\]
for all \,$\,x \,\in\, X$, where \,$0 \,<\, R \,<\, A$.\,Then \,$\left\{\,R_{\,i}\,\right\}_{i \,\in\, I}$\, is a \,$p$-frame associated to \,$\left(\,a_{\,2},\, \cdots,\, a_{\,n}\,\right)$\, for \,$X$.
\end{corollary}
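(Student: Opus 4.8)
The plan is to obtain this as an immediate specialization of Theorem \ref{th2.1}. Concretely, I would apply that theorem with the choice of parameters $\alpha = 0$ and $\beta = R$. With these values, the general perturbation hypothesis
\[\sum_{\,i \,\in\, I}\,\left|\,T_{\,i}\,(\,x,\, a_{\,2},\, \cdots,\, a_{\,n}\,) \,-\, R_{\,i}\,(\,x,\, a_{\,2},\, \cdots,\, a_{\,n}\,)\,\right|^{\,p} \,\leq\, \alpha\,\sum\limits_{\,i \,\in\, I}\,\left|\,T_{\,i}\,(\,x,\, a_{\,2},\, \cdots,\, a_{\,n}\,)\,\right|^{\,p} \,+\, \beta\,\left\|\,x,\, a_{\,2},\, \cdots,\, a_{\,n}\,\right\|^{\,p}_{X}\]
reduces exactly to the hypothesis assumed in the corollary, since the term carrying the coefficient $\alpha$ disappears. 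Thus the first step is simply to record that the corollary's assumption is a particular instance of the theorem's assumption.

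The second step is to verify the admissibility condition $0 \,\leq\, \alpha \,+\, \beta / A \,<\, 1$ required by Theorem \ref{th2.1}. With $\alpha = 0$ and $\beta = R$ this becomes $0 \,\leq\, R / A \,<\, 1$, which is precisely the standing assumption $0 \,<\, R \,<\, A$ (recalling that $A \,>\, 0$ as a lower $p$-frame bound). Since all hypotheses of Theorem \ref{th2.1} are then met, its conclusion applies verbatim, and $\left\{\,R_{\,i}\,\right\}_{i \,\in\, I}$ is a $p$-frame associated to $\left(\,a_{\,2},\, \cdots,\, a_{\,n}\,\right)$ for $X$, with upper bound $\left(\,R^{\,1/p} \,+\, B^{\,1/p}\,\right)^{\,p}$ and lower bound $\left(\,1 \,-\, \left(\,R / A\,\right)^{\,1/p}\,\right)^{\,p}\,A$ coming out of the two estimates in the proof of the theorem.

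For completeness one could also reprove the two frame inequalities directly, mimicking the Minkowski-inequality argument in the proof of Theorem \ref{th2.1}: write the sequence $\{R_{\,i}(x,\cdots)\}$ as $\{(R_{\,i}-T_{\,i})(x,\cdots)\} + \{T_{\,i}(x,\cdots)\}$ and apply the triangle inequality in $\ell^{\,p}$ together with the given bound $R$ and the upper bound $B$ of $\{T_{\,i}\}$ for the upper estimate, and symmetrically for the lower estimate. There is essentially no obstacle; the only point deserving a moment's attention is that the resulting lower frame constant $\left(\,1 \,-\, \left(\,R / A\,\right)^{\,1/p}\,\right)^{\,p}\,A$ is strictly positive, which is guaranteed exactly by the strict inequality $R \,<\, A$.
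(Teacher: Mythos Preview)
Your proposal is correct and matches the paper's own proof, which simply states that the corollary follows directly from Theorem~\ref{th2.1} by putting $\alpha = 0$. Your additional verification of the admissibility condition and the explicit frame bounds are accurate elaborations of that same specialization.
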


\begin{proof}
The proof of this corollary is directly follows from the Theorem (\ref{th2.1}), by putting \,$\alpha \,=\, 0$.
\end{proof}

Now, we present a necessary and sufficient condition for the stability of a \,$p$-frame in \,$n$-Banach space. 

\begin{theorem}
Let \,$\left\{\,T_{\,i}\,\right\}_{i \,\in\, I}$\, be a \,$p$-frame associated to \,$\left(\,a_{\,2},\, \cdots,\, a_{\,n}\,\right)$\, for \,$X$\, with bounds \,$A,\,B$.\,Then a sequence \,$\left\{\,R_{\,i}\,\right\}_{i \,\in\, I} \,\subseteq\, X_{F}^{\,\ast}$\, is a \,$p$-frame associated to \,$\left(\,a_{\,2},\, \cdots,\, a_{\,n}\,\right)$\, for \,$X$\, if and only if there exists a constant \,$M \,>\, 0$\, such that
\[\sum\limits_{\,i \,\in\, I}\,\left|\,T_{\,i}\,(\,x,\, a_{\,2},\, \cdots,\, a_{\,n}\,) \,-\, R_{\,i}\,(\,x,\, a_{\,2},\, \cdots,\, a_{\,n}\,)\,\right|^{\,p}\]
\[\,\leq\, M\,\min\left\{\,\sum\limits_{\,i \,\in\, I}\,\left|\,T_{\,i}\,(\,x,\, a_{\,2},\, \cdots,\, a_{\,n}\,)\,\right|^{\,p},\, \sum\limits_{\,i \,\in\, I}\,\left|\,R_{\,i}\,(\,x,\, a_{\,2},\, \cdots,\, a_{\,n}\,)\,\right|^{\,p}\,\right\}\; \;\forall\, x \,\in\, X.\]  
\end{theorem}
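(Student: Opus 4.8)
The plan is to prove both directions of the equivalence by exploiting the same ``two–sided'' comparison trick that already appeared in Theorem~\ref{th2.1}, namely applying the Minkowski inequality to the $\ell^{\,p}$–norms of the sequences $\{T_{\,i}(x,a_{\,2},\cdots,a_{\,n})\}$, $\{R_{\,i}(x,a_{\,2},\cdots,a_{\,n})\}$ and their difference. Throughout, abbreviate $S_{T}(x)=\bigl(\sum_{i\in I}|T_{\,i}(x,a_{\,2},\cdots,a_{\,n})|^{\,p}\bigr)^{1/p}$, $S_{R}(x)=\bigl(\sum_{i\in I}|R_{\,i}(x,a_{\,2},\cdots,a_{\,n})|^{\,p}\bigr)^{1/p}$, and $S_{D}(x)$ for the same quantity built from $T_{\,i}-R_{\,i}$; the $p$–frame inequalities for $\{T_{\,i}\}$ read $A^{1/p}\|x,a_{\,2},\cdots,a_{\,n}\|_{X}\le S_{T}(x)\le B^{1/p}\|x,a_{\,2},\cdots,a_{\,n}\|_{X}$.

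For the ``if'' direction, assume the hypothesis $S_{D}(x)^{\,p}\le M\min\{S_{T}(x)^{\,p},S_{R}(x)^{\,p}\}$. First I would use $S_{D}(x)\le M^{1/p}S_{T}(x)$ together with the triangle inequality $S_{R}(x)\le S_{D}(x)+S_{T}(x)\le (1+M^{1/p})S_{T}(x)$ and then $S_{T}(x)\le B^{1/p}\|x,a_{\,2},\cdots,a_{\,n}\|_{X}$ to obtain the upper $p$–frame bound $\sum_{i\in I}|R_{\,i}(x,a_{\,2},\cdots,a_{\,n})|^{\,p}\le (1+M^{1/p})^{\,p}B\|x,a_{\,2},\cdots,a_{\,n}\|_{X}^{\,p}$. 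For the lower bound I would instead use the half of the hypothesis that bounds $S_{D}(x)$ by $M^{1/p}S_{R}(x)$: from $S_{T}(x)\le S_{D}(x)+S_{R}(x)\le M^{1/p}S_{R}(x)+S_{R}(x)=(1+M^{1/p})S_{R}(x)$ and the lower $T$–bound $A^{1/p}\|x,a_{\,2},\cdots,a_{\,n}\|_{X}\le S_{T}(x)$ we get $S_{R}(x)\ge \frac{A^{1/p}}{1+M^{1/p}}\|x,a_{\,2},\cdots,a_{\,n}\|_{X}$, i.e.\ lower bound $\dfrac{A}{(1+M^{1/p})^{\,p}}$. This is exactly why the $\min$ is needed: one half of it feeds the upper estimate, the other half the lower estimate.

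For the ``only if'' direction, assume $\{R_{\,i}\}$ is a $p$–frame with some bounds $C,D$. Then $S_{D}(x)\le S_{T}(x)+S_{R}(x)$, and I would bound $S_{R}(x)\le D^{1/p}\|x,a_{\,2},\cdots,a_{\,n}\|_{X}\le (D/A)^{1/p}S_{T}(x)$ to get $S_{D}(x)\le\bigl(1+(D/A)^{1/p}\bigr)S_{T}(x)$; symmetrically, $S_{D}(x)\le S_{T}(x)+S_{R}(x)$ with $S_{T}(x)\le B^{1/p}\|x,a_{\,2},\cdots,a_{\,n}\|_{X}\le (B/C)^{1/p}S_{R}(x)$ gives $S_{D}(x)\le\bigl(1+(B/C)^{1/p}\bigr)S_{R}(x)$. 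Raising to the $p$–th power and taking $M=\max\bigl\{(1+(D/A)^{1/p})^{\,p},\,(1+(B/C)^{1/p})^{\,p}\bigr\}$ yields the required inequality with the $\min$ on the right-hand side. The only mild subtlety — and the step I would be most careful about — is the routine but essential observation that $\{T_{\,i}-R_{\,i}\}$ is itself a $p$–Bessel sequence (so all sums converge and Minkowski applies), which follows since it is the difference of two $p$–Bessel sequences; everything else is bookkeeping with the constants.
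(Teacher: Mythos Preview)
Your proposal is correct and follows essentially the same route as the paper: Minkowski on $S_T$, $S_R$, $S_D$ in both directions, using one half of the $\min$ for the upper $R$--bound and the other half for the lower $R$--bound, and in the converse bounding $S_R$ by $(D/A)^{1/p}S_T$ and $S_T$ by $(B/C)^{1/p}S_R$. Your choice $M=\max\bigl\{(1+(D/A)^{1/p})^{p},\,(1+(B/C)^{1/p})^{p}\bigr\}$ is the right one---the paper writes $\min$ there, which is a slip, since one needs $M$ to dominate \emph{both} constants in order to control $S_D^{p}$ against $\min\{S_T^{p},S_R^{p}\}$.
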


\begin{proof}
Since \,$\left\{\,T_{\,i}\,\right\}_{i \,\in\, I}$\, is a \,$p$-frame associated to \,$\left(\,a_{\,2},\, \cdots,\, a_{\,n}\,\right)$\, for \,$X$\, with bounds \,$A,\,B$, for each \,$x \,\in\, X$, we have
\[A\,\left\|\,x,\, a_{\,2},\, \cdots,\, a_{\,n}\,\right\|^{\,p}_{X}  \,\leq\, \sum\limits_{\,i \,\in\, I}\,\left|\,T_{\,i}\,(\,x,\, a_{\,2},\, \cdots,\, a_{\,n}\,)\,\right|^{\,p}\]
\[\leq\, \left[\,\left(\,\sum\limits_{\,i \,\in\, I}\,\left|\,\left(\,T_{\,i} \,-\, R_{\,i}\,\right)\,(\,x,\, a_{\,2},\, \cdots,\, a_{\,n}\,)\,\right|^{\,p}\,\right)^{\,1 \,/\, p} \,+\, \left(\,\sum\limits_{\,i \,\in\, I}\,\left|\,R_{\,i}\,(\,x,\, a_{\,2},\, \cdots,\, a_{\,n}\,)\,\right|^{\,p}\,\right)^{\,1 \,/\, p}\,\right]^{\,p}\]
\[\leq\, \left(\,M^{\,1 \,/\, p} \,+\, 1\,\right)^{\,p}\,\sum\limits_{\,i \,\in\, I}\,\left|\,R_{\,i}\,(\,x,\, a_{\,2},\, \cdots,\, a_{\,n}\,)\,\right|^{\,p}\hspace{7cm}\]
\[\Rightarrow\, \dfrac{A}{\,\left(\,M^{\,1 \,/\, p} \,+\, 1\,\right)^{\,p}}\,\left\|\,x,\, a_{\,2},\, \cdots,\, a_{\,n}\,\right\|^{\,p}_{X} \,\leq\, \sum\limits_{\,i \,\in\, I}\,\left|\,R_{\,i}\,(\,x,\, a_{\,2},\, \cdots,\, a_{\,n}\,)\,\right|^{\,p}.\hspace{2cm}\]
On the other hand, for each \,$x\,\in\, X$, we have \,$\sum\limits_{\,i \,\in\, I}\,\left|\,R_{\,i}\,(\,x,\, a_{\,2},\, \cdots,\, a_{\,n}\,)\,\right|^{\,p}$
\[\leq\, \left[\,\left(\,\sum\limits_{\,i \,\in\, I}\,\left|\,\left(\,T_{\,i} \,-\, R_{\,i}\,\right)\,(\,x,\, a_{\,2},\, \cdots,\, a_{\,n}\,)\,\right|^{\,p}\,\right)^{\,1 \,/\, p} \,+\, \left(\,\sum\limits_{\,i \,\in\, I}\,\left|\,T_{\,i}\,(\,x,\, a_{\,2},\, \cdots,\, a_{\,n}\,)\,\right|^{\,p}\,\right)^{\,1 \,/\, p}\,\right]^{\,p}\]
\[\leq\, \left(\,M^{\,1 \,/\, p} \,+\, 1\,\right)^{\,p}\,\sum\limits_{\,i \,\in\, I}\,\left|\,T_{\,i}\,(\,x,\, a_{\,2},\, \cdots,\, a_{\,n}\,)\,\right|^{\,p} \leq\, \left(\,M^{\,1 \,/\, p} \,+\, 1\,\right)^{\,p}\,B\,\left\|\,x,\, a_{\,2},\, \cdots,\, a_{\,n}\,\right\|^{\,p}_{X}.\]
Thus, \,$\left\{\,R_{\,i}\,\right\}_{i \,\in\, I}$\, is a \,$p$-frame associated to \,$\left(\,a_{\,2},\, \cdots,\, a_{\,n}\,\right)$\, for \,$X$.\\

Conversely, suppose that \,$\left\{\,R_{\,i}\,\right\}_{i \,\in\, I}$\, is a \,$p$-frame associated to \,$\left(\,a_{\,2},\, \cdots,\, a_{\,n}\,\right)$\, for \,$X$\, with bounds \,$C$\, and \,$D$.\,Then 
\[\sum\limits_{\,i \,\in\, I}\,\left|\,T_{\,i}\,(\,x,\, a_{\,2},\, \cdots,\, a_{\,n}\,) \,-\, R_{\,i}\,(\,x,\, a_{\,2},\, \cdots,\, a_{\,n}\,)\,\right|^{\,p}\hspace{5.5cm}\]
\[\leq\, \left[\,\left(\,\sum\limits_{\,i \,\in\, I}\,\left|\,T_{\,i}\,(\,x,\, a_{\,2},\, \cdots,\, a_{\,n}\,)\,\right|^{\,p}\,\right)^{\,1 \,/\, p} \,+\,  \left(\,\sum\limits_{\,i \,\in\, I}\,\left|\,R_{\,i}\,(\,x,\, a_{\,2},\, \cdots,\, a_{\,n}\,)\,\right|^{\,p}\,\right)^{\,1 \,/\, p}\,\right]^{\,p}\hspace{.5cm}\]
\[\leq\, \left[\,\left(\,\sum\limits_{\,i \,\in\, I}\,\left|\,T_{\,i}\,(\,x,\, a_{\,2},\, \cdots,\, a_{\,n}\,)\,\right|^{\,p}\,\right)^{\,1 \,/\, p} \,+\,  \left(\,D\,\left\|\,x,\, a_{\,2},\, \cdots,\, a_{\,n}\,\right\|^{\,p}_{X}\,\right)^{\,1 \,/\, p}\,\right]^{\,p}\hspace{2cm}\]
\[\leq\, \left[\,\left(\,\sum\limits_{\,i \,\in\, I}\,\left|\,T_{\,i}\,(\,x,\, a_{\,2},\, \cdots,\, a_{\,n}\,)\,\right|^{\,p}\,\right)^{\,1 \,/\, p} \,+\, \left(\,\dfrac{D}{A}\,\sum\limits_{\,i \,\in\, I}\,\left|\,T_{\,i}\,(\,x,\, a_{\,2},\, \cdots,\, a_{\,n}\,)\,\right|^{\,p}\,\right)^{\,1 \,/\, p}\,\right]^{\,p}\]
\[=\, \left(\,1 \,+\, \left(\,\dfrac{D}{A}\,\right)^{\,1 \,/\, p}\,\right)^{\,p}\,\sum\limits_{\,i \,\in\, I}\,\left|\,T_{\,i}\,(\,x,\, a_{\,2},\, \cdots,\, a_{\,n}\,)\,\right|^{\,p}\; \;\forall\; x \,\in\, X.\hspace{3.5cm}\]
Similarly, for each \,$x \,\in\, X$, we obtain
\[\sum\limits_{\,i \,\in\, I}\,\left|\,\left(\,T_{\,i} \,-\, R_{\,i}\,\right)\,(\,x,\, a_{\,2},\, \cdots,\, a_{\,n}\,)\,\right|^{\,p} \,\leq\, \left(\,1 \,+\, \left(\,\dfrac{B}{C}\,\right)^{\,1 \,/\, p}\,\right)^{p}\,\sum\limits_{\,i \,\in\, I}\,\left|\,R_{\,i}\,(\,x,\, a_{\,2},\, \cdots,\, a_{\,n}\,)\,\right|^{\,p}.\]
Hence, 
\[\sum\limits_{\,i \,\in\, I}\,\left|\,T_{\,i}\,(\,x,\, a_{\,2},\, \cdots,\, a_{\,n}\,) \,-\, R_{\,i}\,(\,x,\, a_{\,2},\, \cdots,\, a_{\,n}\,)\,\right|^{\,p}\hspace{3.3cm}\] 
\[\,\leq\, M\,\min\left\{\,\sum\limits_{\,i \,\in\, I}\,\left|\,T_{\,i}\,(\,x,\, a_{\,2},\, \cdots,\, a_{\,n}\,)\,\right|^{\,p},\, \sum\limits_{\,i \,\in\, I}\,\left|\,R_{\,i}\,(\,x,\, a_{\,2},\, \cdots,\, a_{\,n}\,)\,\right|^{\,p}\,\right\},\]
where \,$M \,=\, \min\left\{\left(\,1 \,+\, \left(\,\dfrac{D}{A}\,\right)^{\,1 \,/\, p}\,\right)^{\,p},\, \left(\,1 \,+\, \left(\,\dfrac{B}{C}\,\right)^{\,1 \,/\, p}\,\right)^{\,p}\right\}$.\,This completes the proof.    
\end{proof}

Next, we give a sufficient condition for the finite sum of \,$p$-frame associated to \,$\left(\,a_{\,2},\, \cdots,\, a_{\,n}\,\right)$\, to be a \,$p$-frame associated to \,$\left(\,a_{\,2},\, \cdots,\, a_{\,n}\,\right)$\, for \,$X$.

\begin{theorem}
For \,$k \,=\, 1,\,2,\,\cdots,\,l$, let \,$\left\{\,T_{\,k,\,i}\,\right\}_{i \,\in\, I} \,\subseteq\, X_{F}^{\,\ast}$\, be a \,$p$-frame associated to \,$\left(\,a_{\,2},\, \cdots,\, a_{\,n}\,\right)$\, for \,$X$\, and \,$\left\{\,\alpha_{\,k}\,\right\}_{k \,=\, 1}^{\,l}$\, be any scalars.\,Then \,$\left\{\,\sum\limits_{k \,=\, 1}^{\,l}\,\alpha_{\,k}\,T_{\,k,\,i}\,\right\}_{i \,\in\, I}$\, is a \,$p$-frame associated to \,$\left(\,a_{\,2},\, \cdots,\, a_{\,n}\,\right)$\, for \,$X$, if there exists \,$\beta \,>\, 0$\, and some \,$m \,\in\, \{\,1,\,2,\,\cdots,\,l\,\}$\, such that
\begin{equation}\label{eq1.5}
\beta\,\sum\limits_{\,i \,\in\, I}\,\left|\,T_{\,m,\,i}\,(\,x,\, a_{\,2},\, \cdots,\, a_{\,n}\,)\,\right|^{\,p} \,\leq\, \sum\limits_{\,i \,\in\, I}\,\left|\,\sum\limits_{k \,=\, 1}^{\,l}\,\alpha_{\,k}\,T_{\,k,\,i}\,(\,x,\, a_{\,2},\, \cdots,\, a_{\,n}\,)\,\right|^{\,p}. 
\end{equation}   
\end{theorem}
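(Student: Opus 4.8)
The plan is to verify the two $p$-frame inequalities for the sequence \,$\left\{\,\sum_{k \,=\, 1}^{\,l}\,\alpha_{\,k}\,T_{\,k,\,i}\,\right\}_{i \,\in\, I}$\, separately, after first observing that each \,$\sum_{k \,=\, 1}^{\,l}\,\alpha_{\,k}\,T_{\,k,\,i}$\, is again a bounded \,$b$-linear functional, since a finite linear combination of elements of \,$X_{F}^{\,\ast}$\, stays in \,$X_{F}^{\,\ast}$; thus the object under consideration genuinely is a sequence in \,$X_{F}^{\,\ast}$. For \,$k \,=\, 1,\,\cdots,\,l$, write \,$A_{\,k},\,B_{\,k}$\, for a pair of \,$p$-frame bounds of \,$\left\{\,T_{\,k,\,i}\,\right\}_{i \,\in\, I}$.

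For the upper bound I would apply Minkowski's inequality in \,$l^{\,p}$: for each \,$x \,\in\, X$,
\[\left(\,\sum\limits_{\,i \,\in\, I}\,\left|\,\sum\limits_{k \,=\, 1}^{\,l}\,\alpha_{\,k}\,T_{\,k,\,i}\,(\,x,\, a_{\,2},\, \cdots,\, a_{\,n}\,)\,\right|^{\,p}\,\right)^{\,1 \,/\, p} \,\leq\, \sum\limits_{k \,=\, 1}^{\,l}\,\left|\,\alpha_{\,k}\,\right|\,\left(\,\sum\limits_{\,i \,\in\, I}\,\left|\,T_{\,k,\,i}\,(\,x,\, a_{\,2},\, \cdots,\, a_{\,n}\,)\,\right|^{\,p}\,\right)^{\,1 \,/\, p} \,\leq\, \left(\,\sum\limits_{k \,=\, 1}^{\,l}\,\left|\,\alpha_{\,k}\,\right|\,B_{\,k}^{\,1 \,/\, p}\,\right)\,\left\|\,x,\, a_{\,2},\, \cdots,\, a_{\,n}\,\right\|_{X},\]
so that \,$\left(\,\sum_{k \,=\, 1}^{\,l}\,|\,\alpha_{\,k}\,|\,B_{\,k}^{\,1 \,/\, p}\,\right)^{\,p}$\, is an upper \,$p$-frame bound. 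For the lower bound, this is exactly where hypothesis (\ref{eq1.5}) is used: since \,$\left\{\,T_{\,m,\,i}\,\right\}_{i \,\in\, I}$\, is a \,$p$-frame, its lower inequality together with (\ref{eq1.5}) yields, for every \,$x \,\in\, X$,
\[\sum\limits_{\,i \,\in\, I}\,\left|\,\sum\limits_{k \,=\, 1}^{\,l}\,\alpha_{\,k}\,T_{\,k,\,i}\,(\,x,\, a_{\,2},\, \cdots,\, a_{\,n}\,)\,\right|^{\,p} \,\geq\, \beta\,\sum\limits_{\,i \,\in\, I}\,\left|\,T_{\,m,\,i}\,(\,x,\, a_{\,2},\, \cdots,\, a_{\,n}\,)\,\right|^{\,p} \,\geq\, \beta\,A_{\,m}\,\left\|\,x,\, a_{\,2},\, \cdots,\, a_{\,n}\,\right\|^{\,p}_{X},\]
so \,$\beta\,A_{\,m}$\, is a valid lower \,$p$-frame bound. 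Combining the two estimates shows that \,$\left\{\,\sum_{k \,=\, 1}^{\,l}\,\alpha_{\,k}\,T_{\,k,\,i}\,\right\}_{i \,\in\, I}$\, is a \,$p$-frame associated to \,$\left(\,a_{\,2},\, \cdots,\, a_{\,n}\,\right)$\, for \,$X$.

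There is no genuinely hard step here; the only points needing a little care are (i) recording that the finite combination remains a bounded \,$b$-linear functional, and (ii) using Minkowski's inequality rather than the coarser bound (\ref{eqq1}) for the upper estimate, so that the resulting constant is clean. The lower estimate is immediate once the lower frame inequality for the distinguished index \,$m$\, is substituted into (\ref{eq1.5}).
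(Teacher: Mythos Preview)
Your proof is correct and follows essentially the same approach as the paper: the lower bound is obtained exactly as you do, by combining (\ref{eq1.5}) with the lower frame inequality for $\{T_{m,i}\}$, and the upper bound by invoking the individual upper frame bounds $B_k$. The only cosmetic difference is that the paper estimates the upper bound via a $\max_k |\alpha_k|^p$--type inequality rather than Minkowski, yielding a different (and less tidy) explicit constant.
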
  

\begin{proof}
For each \,$1 \,\leq\, m \,\leq\, l$, let \,$A_{\,m}$\, and \,$B_{\,m}$\, be the frame bounds of the \,$p$-frame \,$\left\{\,T_{\,m,\,i}\,\right\}_{i \,\in\, I}$\, associated to \,$\left(\,a_{\,2},\, \cdots,\, a_{\,n}\,\right)$\, for \,$X$.\,Let \,$\beta \,>\, 0$\, be a constant satisfying (\ref{eq1.5}).\,Then for each \,$x \,\in\, X$, we have
\[A_{\,m}\,\beta\,\left\|\,x,\, a_{\,2},\, \cdots,\, a_{\,n}\,\right\|^{\,p}_{X} \,\leq\, \beta\,\sum\limits_{\,i \,\in\, I}\,\left|\,T_{\,m,\,i}\,(\,x,\, a_{\,2},\, \cdots,\, a_{\,n}\,)\,\right|^{\,p}\]
\[\leq\, \sum\limits_{\,i \,\in\, I}\,\left|\,\sum\limits_{k \,=\, 1}^{\,l}\,\alpha_{\,k}\,T_{\,k,\,i}\,(\,x,\, a_{\,2},\, \cdots,\, a_{\,n}\,)\,\right|^{\,p}\; \;[\;\text{by (\ref{eq1.5})}\;].\]
On the other hand, for each \,$x \,\in\, X$, we have
\[\sum\limits_{\,i \,\in\, I}\,\left|\,\sum\limits_{k \,=\, 1}^{\,l}\,\alpha_{\,k}\,T_{\,k,\,i}\,(\,x,\, a_{\,2},\, \cdots,\, a_{\,n}\,)\,\right|^{\,p}\hspace{3.5cm}\]
\[ \,\leq\, \max\limits_{1 \,\leq\, k \,\leq\, l}\,\left|\,\alpha_{\,k}\,\right|^{\,p}\,\sum\limits_{k \,=\, 1}^{\,l}\,\left(\,\sum\limits_{\,i \,\in\, I}\,\left|\,T_{\,k,\,i}\,(\,x,\, a_{\,2},\, \cdots,\, a_{\,n}\,)\,\right|^{\,p}\,\right)\]
\[\,\leq\, \max\limits_{1 \,\leq\, k \,\leq\, l}\,\left|\,\alpha_{\,k}\,\right|^{\,p}\,\sum\limits_{k \,=\, 1}^{\,l}\,B_{\,k}\,\left\|\,x,\, a_{\,2},\, \cdots,\, a_{\,n}\,\right\|^{\,p}_{X},\hspace{1.5cm}\]
where \,$B_{\,k}$\, is the upper bound of the \,$p$-frame \,$\left\{\,T_{\,k,\,i}\,\right\}_{i \,\in\, I}$\, associated to \,$\left(\,a_{\,2},\, \cdots,\, a_{\,n}\,\right)$.
Hence, \,$\left\{\,\sum\limits_{k \,=\, 1}^{\,l}\,\alpha_{\,k}\,T_{\,k,\,i}\,\right\}_{i \,\in\, I}$\, is a \,$p$-frame associated to \,$\left(\,a_{\,2},\, \cdots,\, a_{\,n}\,\right)$\, for \,$X$.   
\end{proof}

\begin{theorem}
For \,$k \,=\, 1,\,2,\,\cdots,\,l$, let \,$\left\{\,T_{\,k,\,i}\,\right\}_{i \,\in\, I} \,\subseteq\, X_{F}^{\,\ast}$\, be a \,$p$-frame associated to \,$\left(\,a_{\,2},\, \cdots,\, a_{\,n}\,\right)$\, for \,$X$\, and \,$\left\{\,R_{\,k,\,i}\,\right\}_{i \,\in\, I} \,\subseteq\, X_{F}^{\,\ast}$\, be any sequence.\,Suppose \,$Q \,:\, l^{\,p} \,\to\, l^{\,p}$\, be a bounded linear operator  such that
\[Q\,\left(\,\left\{\,\sum\limits_{k \,=\, 1}^{\,l}\,R_{\,k,\,i}\,(\,x,\, a_{\,2},\, \cdots,\, a_{\,n}\,)\,\right\}\,\right) \,=\, \left\{\,T_{\,m,\,i}\,(\,x,\, a_{\,2},\, \cdots,\, a_{\,n}\,)\,\right\},\]
for some \,$m \,\in\, \{\,1,\,2,\,\cdots,\,l\,\}$.\,If there exists a non-negative constant \,$\lambda$\, such that
\[\sum\limits_{\,i \,\in\, I}\,\left|\,\left(\,T_{\,k,\,i} \,-\, R_{\,k,\,i}\,\right)\,(\,x,\, a_{\,2},\, \cdots,\, a_{\,n}\,)\,\right|^{\,p}\,\leq\, \lambda\,\sum\limits_{\,i \,\in\, I}\,\left|\,T_{\,k,\,i}\,(\,x,\, a_{\,2},\, \cdots,\, a_{\,n}\,)\,\right|^{\,p},\, \,x \,\in\, X,\]
\,$k \,=\, 1,\,2,\,\cdots,\,l$.\,Then \,$\left\{\,\sum\limits_{k \,=\, 1}^{\,l}\,R_{\,k,\,i}\,\right\}_{i \,\in\, I}$\, is a \,$p$-frame associated to \,$\left(\,a_{\,2},\, \cdots,\, a_{\,n}\,\right)$\, for \,$X$. 
\end{theorem}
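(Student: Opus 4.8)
The plan is to establish the two defining inequalities of a $p$-frame for the sequence $\left\{\,\sum_{k=1}^{l} R_{k,i}\,\right\}_{i\in I}$ in turn, proving the upper (Bessel-type) bound first so that the lower bound argument involving $Q$ is legitimate. Throughout, write $S_{i} = \sum_{k=1}^{l} R_{k,i}$, so that $S_{i}(x,a_{2},\ldots,a_{n}) = \sum_{k=1}^{l} R_{k,i}(x,a_{2},\ldots,a_{n})$, and let $A_{k},B_{k}$ denote the frame bounds of the $p$-frame $\{T_{k,i}\}_{i\in I}$ associated to $(a_{2},\ldots,a_{n})$.

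\textbf{Upper bound.} First I would apply the Minkowski inequality in $l^{p}$ to peel off the finite sum over $k$:
\[
\left(\sum_{i\in I}\left|S_{i}(x,a_{2},\ldots,a_{n})\right|^{p}\right)^{1/p} \leq \sum_{k=1}^{l}\left(\sum_{i\in I}\left|R_{k,i}(x,a_{2},\ldots,a_{n})\right|^{p}\right)^{1/p}.
\]
Then, for each fixed $k$, writing $R_{k,i} = (R_{k,i}-T_{k,i}) + T_{k,i}$, another application of Minkowski together with the hypothesis $\sum_{i}\left|(T_{k,i}-R_{k,i})(x,a_{2},\ldots,a_{n})\right|^{p}\le\lambda\sum_{i}\left|T_{k,i}(x,a_{2},\ldots,a_{n})\right|^{p}$ and the upper bound $B_{k}$ gives
\[
\left(\sum_{i\in I}\left|R_{k,i}(x,a_{2},\ldots,a_{n})\right|^{p}\right)^{1/p} \leq \left(1+\lambda^{1/p}\right)\left(\sum_{i\in I}\left|T_{k,i}(x,a_{2},\ldots,a_{n})\right|^{p}\right)^{1/p} \leq \left(1+\lambda^{1/p}\right)B_{k}^{1/p}\left\|x,a_{2},\ldots,a_{n}\right\|_{X}.
\]
Summing over $k$ yields the upper frame bound $\bigl[(1+\lambda^{1/p})\sum_{k=1}^{l}B_{k}^{1/p}\bigr]^{p}$, and in particular shows $\{S_{i}(x,a_{2},\ldots,a_{n})\}_{i\in I}\in l^{p}$ for every $x\in X$, so that $Q$ can legitimately be applied to this sequence.

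\textbf{Lower bound.} Here I would use the operator $Q$. Since $Q$ is bounded and $Q\bigl(\{S_{i}(x,a_{2},\ldots,a_{n})\}_{i\in I}\bigr) = \{T_{m,i}(x,a_{2},\ldots,a_{n})\}_{i\in I}$, we obtain
\[
\left(\sum_{i\in I}\left|T_{m,i}(x,a_{2},\ldots,a_{n})\right|^{p}\right)^{1/p} \leq \|Q\|\left(\sum_{i\in I}\left|S_{i}(x,a_{2},\ldots,a_{n})\right|^{p}\right)^{1/p}.
\]
Combining with the lower $p$-frame bound $A_{m}$ of $\{T_{m,i}\}_{i\in I}$,
\[
A_{m}\left\|x,a_{2},\ldots,a_{n}\right\|_{X}^{p} \leq \sum_{i\in I}\left|T_{m,i}(x,a_{2},\ldots,a_{n})\right|^{p} \leq \|Q\|^{p}\sum_{i\in I}\left|S_{i}(x,a_{2},\ldots,a_{n})\right|^{p},
\]
so $\dfrac{A_{m}}{\|Q\|^{p}}\left\|x,a_{2},\ldots,a_{n}\right\|_{X}^{p} \leq \sum_{i\in I}\left|S_{i}(x,a_{2},\ldots,a_{n})\right|^{p}$, the required lower bound; note that $\|Q\|\neq 0$, since $\|Q\|=0$ would force every $T_{m,i}$ to vanish, contradicting that $\{T_{m,i}\}_{i\in I}$ is a $p$-frame.

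The argument is essentially routine. The only delicate points are confirming the membership $\{S_{i}(x,a_{2},\ldots,a_{n})\}_{i\in I}\in l^{p}$ before invoking $Q$ (which is exactly why the upper bound should be carried out first) and observing $\|Q\|>0$; I do not anticipate a genuine obstacle.
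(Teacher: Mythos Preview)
Your argument is correct and follows essentially the same route as the paper: Minkowski together with the perturbation hypothesis for the upper bound, and the boundedness of $Q$ combined with the lower frame bound $A_{m}$ of $\{T_{m,i}\}$ for the lower bound. The only cosmetic difference is the order in which you apply Minkowski in the upper estimate, yielding the constant $\bigl[(1+\lambda^{1/p})\sum_{k}B_{k}^{1/p}\bigr]^{p}$ in place of the paper's $(1+\lambda^{1/p})^{p}\sum_{k}B_{k}$; your added remarks that the upper bound must precede the application of $Q$ and that $\|Q\|>0$ are points the paper leaves implicit.
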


\begin{proof}
For each \,$x \,\in\, X$, \,$\sum\limits_{\,i \,\in\, I}\,\left|\,\sum\limits_{k \,=\, 1}^{\,l}\,R_{\,k,\,i}\,(\,x,\, a_{\,2},\, \cdots,\, a_{\,n}\,)\,\right|^{\,p}$
\[\leq\, \sum\limits_{\,i \,\in\, I}\,\left[\,\left(\,\sum\limits_{k \,=\, 1}^{\,l}\,\left|\,\left(\,T_{\,k,\,i} \,-\, R_{\,k,\,i}\,\right)\,(\,x,\, a_{\,2},\, \cdots,\, a_{\,n}\,)\,\right|^{\,p}\,\right)^{\,1 \,/\, p} \,+\, \left(\,\sum\limits_{k \,=\, 1}^{\,l}\,\left|\,T_{\,k,\,i}\,(\,x,\, a_{\,2},\, \cdots,\, a_{\,n}\,)\,\right|^{\,p}\,\right)^{\,1 \,/\, p}\,\right]^{\,p}\]
\[\leq\, \sum\limits_{k \,=\, 1}^{\,l}\,\left(\,1 \,+\, \lambda^{\,1 \,/\, p}\,\right)^{\,p}\,\sum\limits_{\,i \,\in\, I}\,\left|\,T_{\,k,\,i}\,(\,x,\, a_{\,2},\, \cdots,\, a_{\,n}\,)\,\right|^{\,p}\leq\, \left(\,1 \,+\, \lambda^{\,1 \,/\, p}\,\right)^{\,p}\,\sum\limits_{k \,=\, 1}^{\,l}\,B_{\,k}\,\left\|\,x,\, a_{\,2},\, \cdots,\, a_{\,n}\,\right\|^{\,p}_{X},\]
where \,$B_{\,k}$\, is the upper bound of the \,$p$-frame \,$\left\{\,T_{\,k,\,i}\,\right\}_{i \,\in\, I}$\, associated to \,$\left(\,a_{\,2},\, \cdots,\, a_{\,n}\,\right)$.
Also, for each \,$x \,\in\, X$, we have
\[\left|\,Q\,\left(\,\left\{\,\sum\limits_{k \,=\, 1}^{\,l}\,R_{\,k,\,i}\,(\,x,\, a_{\,2},\, \cdots,\, a_{\,n}\,)\,\right\}\,\right)\,\right|^{\,p} \,=\, \sum\limits_{\,i \,\in\, I}\,\left|\,T_{\,m,\,i}\,(\,x,\, a_{\,2},\, \cdots,\, a_{\,n}\,)\,\right|^{\,p}.\]
Therefore, for each \,$x \,\in\, X$, we have
\[A_{\,m}\,\left\|\,x,\, a_{\,2},\, \cdots,\, a_{\,n}\,\right\|^{\,p}_{X} \,\leq\, \sum\limits_{\,i \,\in\, I}\,\left|\,T_{\,m,\,i}\,(\,x,\, a_{\,2},\, \cdots,\, a_{\,n}\,)\,\right|^{\,p}\hspace{4cm}\]
\[\hspace{2cm}\leq\, \|\,Q\,\|^{\,p}\,\sum\limits_{\,i \,\in\, I}\,\left|\,\sum\limits_{k \,=\, 1}^{\,l}\,R_{\,k,\,i}\,(\,x,\, a_{\,2},\, \cdots,\, a_{\,n}\,)\,\right|^{\,p},\]
where \,$A_{\,m}$\, is the lower bound of the \,$p$-frame \,$\left\{\,T_{\,m,\,i}\,\right\}_{i \,\in\, I}$\, associated to \,$\left(\,a_{\,2},\, \cdots,\, a_{\,n}\,\right)$.
\[\text{Thus,}\hspace{.4cm}\dfrac{A_{\,m}}{\|\,Q\,\|^{\,p}}\,\left\|\,x,\, a_{\,2},\, \cdots,\, a_{\,n}\,\right\|^{\,p}_{X} \,\leq\, \sum\limits_{\,i \,\in\, I}\,\left|\,\sum\limits_{k \,=\, 1}^{\,l}\,R_{\,k,\,i}\,(\,x,\, a_{\,2},\, \cdots,\, a_{\,n}\,)\,\right|^{\,p}.\hspace{3cm}\]
Hence, \,$\left\{\,\sum\limits_{k \,=\, 1}^{\,l}\,R_{\,k,\,i}\,\right\}_{i \,\in\, I}$\, is a \,$p$-frame associated to \,$\left(\,a_{\,2},\, \cdots,\, a_{\,n}\,\right)$\, for \,$X$.    
\end{proof}

\subsection{Compliance with Ethical Standards:}

\smallskip\hspace{.6 cm}{\bf Fund:} There are no funding sources.\\

{\bf Conflict of Interest:} First Author declares that he has no conflict of interest.\,Second Author declares that he has no conflict of interest.\\

{\bf Ethical approval:} This article does not contain any studies with human participants performed by any of the authors.

\end{document}